\theoremstyle{plain}
\newtheorem{theorem}{Theorem}[section]
\newtheorem{lemma}[theorem]{Lemma}
\newtheorem{proposition}[theorem]{Proposition}
\theoremstyle{definition}
\newtheorem{definition}{Definition}[section]
\newtheorem{example}[definition]{Example}
\newtheorem{remark}[definition]{Remark}
\newcommand{\rmapleqq}[2]{\tau^\leq_{#1} (#2)}
\newcommand{\rmapgeqq}[2]{\tau^\geq_{#1} (#2)}
\newcommand{\opleq}[2]{#1^\leq_{#2}}
\newcommand{\opgeq}[2]{#1^\geq_{#2}}
\DeclareMathOperator*{\argmin}{argmin}
\title{A block-sparse Tensor Train Format for sample-efficient high-dimensional Polynomial Regression}
\author{
 Michael G\"otte \\
  TU Berlin\\
  goette@math.tu-berlin.de
   \And
    Reinhold Schneider\\
    TU Berlin\\
  schneidr@math.tu-berlin.de
  \And
   Philipp Trunschke\\
   TU Berlin\\
  ptrunschke@mail.tu-berlin.de
}
\begin{document}
\maketitle
\begin{abstract}
Low-rank tensors are an established framework for high-dimensional least-squares problems.
We propose to extend this framework by including the concept of block-sparsity.
In the context of polynomial regression each sparsity pattern corresponds to some subspace of homogeneous multivariate polynomials.
This allows us to adapt the ansatz space to align better with known sample complexity results.
The resulting method is tested in numerical experiments and demonstrates improved computational resource utilization and sample efficiency.
\end{abstract}

\keywords{empirical $L^2$ approximation \and sample efficiency \and homogeneous polynomials \and sparse tensor networks \and alternating least squares}

\section{Introduction}

An important problem in many applications is the identification of a function from measurements or random samples.
For this problem to be well-posed, some prior information about the function has to be assumed and a common requirement is that the function can be approximated in a finite dimensional ansatz space.
For the purpose of extracting governing equations the most famous approach in recent years has been SINDy~\cite{brunton_discovering_2016}.
However, the applicability of SINDy to high-dimensional problems is limited since truly high-dimensional problems require a nonlinear parameterization of the ansatz space.
One particular reparametrization that has proven itself in many applications are tensor networks.
These allow for a straight-forward extension of SINDy~\cite{gels_multidimensional_2019} but can also encode additional structure as presented in~\cite{goesmann_tensor_2020-1}.
The compressive capabilities of tensor networks originate from this ability to exploit additional structure like smoothness, locality or self-similarity and have hence been used in solving high-dimensional equations~\cite{kazeev_low-rank_2012-1,kazeev_quantized_2018,bachmayr_stability_2020,Eigel_2016}.
In the context of optimal control tensor train networks have been utilized for solving the Hamilton--Jacobi--Bellman equation in~\cite{dolgov_tensor_2021,oster_approximating_2020}, for solving backward stochastic differential equations in~\cite{richter_solving_2021} and for the calculation of stock options prices in~\cite{bayer_pricing_2021,glau_low-rank_2020}.
In the context of uncertainty quantification they are used in~\cite{eigel_non-intrusive_2019,eigel_variational_2019,Zhang2015TTUQ} and in the context of image classification they are used in~\cite{klus_tensor-based_2019-1,stoudenmire_supervised_2016}.\\
A common thread in these publications is the parametrization of a high-dimensional ansatz space by a tensor train network which is then optimized.
In most cases this means that the least-squares error of the parametrized function to the data is minimized. 
There exist many methods to perform this minimization.
A well-known algorithm in the mathematics community is the \emph{alternating linear scheme (ALS)}~\cite{oseledets_dmrg_2011,holtz_alternating_2012}
, which is related to the famous DMRG method~\cite{white_density_1992} for solving the Schr\"odinger equation in Quantum Physics.
Although, not directly suitable for recovery tasks, it became apparent that DMRG and ALS can be adapted to work in this context.
Two of these extensions to the ALS algorithm are the \emph{stablilized ALS approximation (SALSA)}~\cite{grasedyck_stable_2019} and the \emph{block alternating steepest descent for Recovery (bASD)} algorithm~\cite{eigel_non-intrusive_2019}.
Both adapt the tensor network ranks and are better suited to the problem of data identification.
Since the set of tensor trains of fixed rank forms a manifold~\cite{holtz_manifolds_2012} it is also possible to perform gradient based optimization schemes~\cite{steinlechner_riemannian_2016}.
This however is not a path that we pursue in this work.
Our contribution extends the ALS (and SALSA) algorithm and can be applied to many of the fields stated above.\\
In~\cite{eigel_convergence_2020} an important observation was made.
It was shown that for tensor networks the sample complexity, meaning the number of data points needed, is related to the dimension of the high-dimensional ansatz space.
For the task of data identification, this is a somewhat disappointing result, since it states that the worst-case bound for a low-rank tensor network is similar to the bound for the full high-dimensional tensor space.
But strangely enough, these huge sample sizes are not needed in most practical examples.
We take this results as a foundation to rethink the approach at hand.
By restricting the full tensor space to a subspace for which the worst-case sample complexity is more moderate we can reduce the gap between observed sample complexity and proven worst-case bound.
By doing so we do not have to worry that our application may require a huge number of samples.
In this work we consider ansatz spaces of homogeneous polynomials of a given degree.
These spaces exhibits a more favorable sampling complexity than the full multivariate polynomial tensor space and there exist many approximation theoretic results that ensure a good approximation with a low degree polynomial for many classes of functions.
Both properties are important to recover a function from data.
The presented approach is very versatile and can be combined with many polynomial approximation strategies like the use of Taylor's theorem in~\cite{breiten_taylor_2019}.
\\
A central objective of this paper is to restrict the linear ansatz space while retaining the profitable compression of tensor product representations.
This can be achieved because the coefficient tensor of a homogeneous polynomial can be represented as a tensor train with a block-sparse structure in its component tensors.
This representation allows us to parametrize the space of homogeneous polynomials of a given degree in an exact and very efficient manner.
This is known to quantum physicists for at least a decade~\cite{singh_tensor_2010-1} but it was introduced to the mathematics community only recently in~\cite{bachmayr}.
In the language of quantum mechanics one would say that %
there exists an operator for which the coefficient tensor of any homogeneous polynomial is an eigenvector.
This encodes a symmetry, where the eigenvalue of this eigenvector is the degree of the homogeneous polynomial, which acts as a quantum number
and corresponds to the particle number of bosons and fermions.
\\
This means, we kill two birds with one stone.
By applying block-sparsity to the coefficient tensor we can restrict the ansatz space to well-behaved functions which can be identified with a reasonable sample size.
At the same time we reduce the number of parameters and speed up the least-squares minimization task.
\\
The remainder of this work is structured as follows.
Section~\ref{sec:notation} introduces basic tensor notation, the different parametrizations of polynomials that are used in this work and then formulates the associated least-squares problems.
In Section~\ref{sec:knownresults} we state the known results on sampling complexity and block sparsity.
Furthermore, we set the two results in relation and argue why this leads to more favorable ansatz spaces.
This includes a proof of rank-bounds for a class of homogeneous polynomials which can be represented particularly efficient as tensor trains. 
Section~\ref{sec:method} derives two parametrizations from the results of Section~\ref{sec:knownresults} and presents the algorithms that are used to solve the associated least-squares problems.
Finally, Section~\ref{sec:numerics} gives some numerical results for different classes of problems focusing on the comparison of the sample complexity for the full- and sub-spaces.
Most notably, the recovery of a quantity of interest for a parametric PDE, where our approach achieves successful recovery with relatively few parameters and samples.
We observed that for suitable problems the number of parameters can be reduced by a factor of almost $10$.

\section{Notation}\label{sec:notation}

In our opinion,  using a graphical notation for the involved contractions in a tensor network drastically simplifies the expressions makes the whole setup more approachable.
This Section introduces this graphical notation for tensor networks, the spaces that will be used in the remainder of this work and the regression framework.

\subsection{Tensors and indices}
\begin{definition}
    Let $d\in\mathbb{N}_{>0}$. Then $\boldsymbol{n} = (n_1,\cdots,n_d)\in\mathbb{N}^d$ is called a \emph{dimension tuple of order $d$} and $x\in\mathbb{R}^{n_1\times\cdots\times n_d} =: \mathbb{R}^{\boldsymbol{n}}$ is called a \emph{tensor of order $d$ and dimension $\boldsymbol{n}$}.
    Let $\mathbb{N}_{n} = \{1,\ldots,n\}$ then a tuple $(l_1,\ldots,l_d) \in \mathbb{N}_{n_1}\times\cdots\times\mathbb{N}_{n_d}=:\mathbb{N}_{\boldsymbol{n}}$ is called a \emph{multi-index} and the corresponding entry of $x$ is denoted by $x(l_1,\ldots,l_d)$.
    The positions $1,\ldots,d$ of the indices $l_1,\ldots,l_d$ in the expression $x(l_1,\ldots,l_d)$ are called \emph{modes of $x$}.
\end{definition}

To define further operations on tensors it is often useful to associate each mode with a symbolic index.

\begin{definition}
	A \emph{symbolic index} $i$ of dimension $n$ is a placeholder for an arbitrary but fixed natural number between $1$ and $n$.
	For a dimension tuple $\boldsymbol{n}$ of order $d$ and a tensor $x\in\mathbb{R}^{\boldsymbol{n}}$ we may write $x(i_1,\ldots,i_d)$ and tacitly assume that $i_k$ are indices of dimension $n_k$ for each $k=1,\ldots,d$.
    When standing for itself this notation means $x(i_1,\ldots,i_d) = x\in\mathbb{R}^{\boldsymbol{n}}$ and may be used to \emph{slice} the tensor
    \begin{equation*}
        x(i_1,l_2,\ldots,l_d) \in \mathbb{R}^{n_1}
    \end{equation*}
    where $l_k\in\mathbb{N}_{n_k}$ are fixed indices for all $k=2,\ldots,d$.
    For any dimension tuple $\boldsymbol{n}$ of order $d$ we define the symbolic multi-index $i^{\boldsymbol{n}} = (i_1, \ldots, i_d)$ of dimension $\boldsymbol{n}$ where $i_k$ is a symbolic index of dimension $n_k$ for all $k=1,\ldots,d$.
\end{definition}

\begin{remark}
	We use the letters $i$ and $j$ (with appropriate subscripts) for symbolic indices while reserving the letters $k$, $l$ and $m$ for ordinary indices.
\end{remark}
\begin{example}
	Let $x$ be an order $2$ tensor with mode dimensions $n_1$ and $n_2$, i.e.\ an $n_1$-by-$n_2$ matrix.
	Then $x(\ell_1,j)$ denotes the $\ell_1$-th row of $x$ and $x(i,\ell_2)$ denotes the $\ell_2$-th column of $x$.
\end{example}

Inspired by Einstein notation we use the concept of symbolic indices to define different operations on tensors.
\begin{definition}\label{def:indexproduct}
	Let $i_1$ and $i_2$ be (symbolic) indices of dimension $n_1$ and $n_2$, respectively and let $\varphi$ be a bijection
	\begin{equation*}
	     \varphi : \mathbb{N}_{n_1}\times \mathbb{N}_{n_2} \rightarrow \mathbb{N}_{n_1n_2}.
	\end{equation*}
	We then define \textit{the product of indices} with respect to $\varphi$ as $j=\varphi(i_1,i_2)$ where $j$ is a (symbolic) index of dimension $n_1n_2$.
	In most cases the choice of bijection is not important and we will write $i_1\cdot i_2 := \varphi(i_1,i_2)$ for an arbitrary but fixed bijection $\varphi$.
	For a tensor $x$ of dimension $(n_1,n_2)$ the expression
	\begin{equation*}
	    y(i_1\cdot i_2) = x(i_1, i_2)
	\end{equation*}
	defines the tensor $y$ of dimension $(n_1 n_2)$ while the expression
	\begin{equation*}
	    x(i_1, i_2) =  y(i_1\cdot i_2)
	\end{equation*}
	defines $x\in\mathbb{R}^{n_1\times n_2}$ from $y\in\mathbb{R}^{n_1n_2}$.
\end{definition}

\begin{definition}\label{def:productlike}
    Consider the tensors $x\in\mathbb{R}^{\boldsymbol{n}_1\times a\times\boldsymbol{n}_2}$ and $y\in\mathbb{R}^{\boldsymbol{n}_3\times b\times\boldsymbol{n}_4}$.
    Then the expression
    \begin{equation} \label{eq:outer}
        z(i^{\boldsymbol{n_1}},i^{\boldsymbol{n_2}},j_1,j_2,i^{\boldsymbol{n_3}},i^{\boldsymbol{n_4}}) = x(i^{\boldsymbol{n_1}},j_1,i^{\boldsymbol{n_2}})\cdot y(i^{\boldsymbol{n_3}},j_2,i^{\boldsymbol{n_4}})
    \end{equation}
    defines the tensor $z\in\mathbb{R}^{\boldsymbol{n}_1\times \boldsymbol{n}_2\times a\times b\times\boldsymbol{n}_3\times\boldsymbol{n}_4}$ in the obvious way.
    Similary, for $a=b$ the expression 
    \begin{equation} \label{eq:hadamard}
        z(i^{\boldsymbol{n_1}},i^{\boldsymbol{n_2}},j,i^{\boldsymbol{n_3}},i^{\boldsymbol{n_4}}) = x(i^{\boldsymbol{n_1}},j,i^{\boldsymbol{n_2}})\cdot y(i^{\boldsymbol{n_3}},j,i^{\boldsymbol{n_4}})
    \end{equation}
    defines the tensor $z\in\mathbb{R}^{\boldsymbol{n}_1\times \boldsymbol{n}_2\times a\times\boldsymbol{n}_3\times\boldsymbol{n}_4}$.
    Finally, also for $a=b$ the expression
    \begin{equation} \label{eq:inner}
        z(i^{\boldsymbol{n_1}},i^{\boldsymbol{n_2}},i^{\boldsymbol{n_3}},i^{\boldsymbol{n_4}}) = x(i^{\boldsymbol{n_1}},j,i^{\boldsymbol{n_2}})\cdot y(i^{\boldsymbol{n_3}},j,i^{\boldsymbol{n_4}})
    \end{equation}
    defines the tensor $z\in\mathbb{R}^{\boldsymbol{n}_1\times \boldsymbol{n}_2\times \boldsymbol{n}_3\times\boldsymbol{n}_4}$ as
    \begin{equation*}
        z(i^{\boldsymbol{n_1}},i^{\boldsymbol{n_2}},i^{\boldsymbol{n_3}},i^{\boldsymbol{n_4}}) = \sum_{k=1}^a x(i^{\boldsymbol{n_1}},k,i^{\boldsymbol{n_2}})\cdot y(i^{\boldsymbol{n_3}},k,i^{\boldsymbol{n_4}}) .
    \end{equation*}
\end{definition}
We choose this description mainly because of its simplicity and how it relates to the implementation of these operations in the numeric libraries \texttt{numpy}~\cite{oliphant_guide_2006} and \texttt{xerus}~\cite{huber_xerus_2014}.

\subsection{Graphical notation and tensor networks}
This section will introduce the concept of \textit{tensor networks}~\cite{espig_optimization_2011} and a graphical notation for certain operations which will simplify working with these structures.
To this end we reformulate the operations introduced in the last section in terms of nodes, edges and half edges.
\begin{definition}
    For a dimension tuple $\boldsymbol{n}$ of order $d$ and a tensor $x\in\mathbb{R}^{\boldsymbol{n}}$ the \textit{graphical representation} of $x$ is given by
    \begin{center}
    	\begin{tikzpicture}
    	\draw[black,fill=black] (0,0) circle (0.5ex);
    	\node[anchor=south east] at (0,0) {$x$};
    	\draw(-1,0)--(1,0);
    	\draw(0,1)--(0,-1);
    	\node[anchor=south] at (-1,0) {$i_1$};
    	\node[anchor=west] at (0,1) {$i_2$};	\node[anchor=north] at (1,0) {$i_3$};
    	\node at (0.5,-0.5) {\reflectbox{$\ddots$}};
    	\node[anchor=east] at (0,-1) {$i_d$};
    	\end{tikzpicture}
    \end{center}
    where the node represents the tensor and the half edges represent the $d$ different modes of the tensor illustrated by the symbolic indices $i_1,\ldots,i_d$.
\end{definition}
With this definition we can write the reshapings of Defintion~\ref{def:indexproduct} simply as
\begin{center}
	\begin{tikzpicture}
	\node[anchor=east] at (-1,0) {$x(i_1, i_2\cdot i_3\cdots i_d)\quad=\quad$};
	\draw[black,fill=black] (0,0) circle (0.5ex);
	\node[anchor=south, inner sep=5] at (0,0) {$x$};
	\draw(-1,0)--(1,0);
	\node[anchor=south] at (-1,0) {$i_1$};
	\node[anchor=south west] at (1,0) {$\hspace{-0.5em}i_2\cdot i_3\cdots i_d$};
	\end{tikzpicture}
\end{center}
and also simplify the binary operations of Definition~\ref{def:productlike}.
\begin{definition}
    Let $x\in\mathbb{R}^{\boldsymbol{n}_1\times a\times \boldsymbol{n}_2}$ and $y\in\mathbb{R}^{\boldsymbol{n}_3\times b\times \boldsymbol{n}_4}$ be two tensors. %
    Then Operation~\eqref{eq:outer} is represented by
    \begin{center}
    	\begin{tikzpicture}
    	\draw[black,fill=black] (0,0) circle (0.5ex);
    	\node[anchor=south east] at (0,0) {$x$};
    	\draw(-1,0)--(0,0);
    	\draw(0,1)--(0,-1);
    	\node[anchor=south] at (-1,0) {$i$};
    	\node[anchor=south] at (0,1) {{$i^{\boldsymbol{n_1}}$}};	
    	\node[anchor=north] at (0,-1) {{$i^{\boldsymbol{n_2}}$}};
    	
    	\draw[black,fill=black] (2,0) circle (0.5ex);
    	\node[anchor=south west] at (2,0) {$y$};
    	\draw(2,0)--(3,0);
    	\draw(2,1)--(2,-1);
    	\node[anchor=south] at (3,0) {$j$};
    	\node[anchor=south] at (2,1) {$i^{\boldsymbol{n_3}}$};	
    	\node[anchor=north] at (2,-1) {$i^{\boldsymbol{n_4}}$};
    	
    	\node at (4,0) {$=$};
    	
    	\draw[black,fill=black] (6,0) circle (0.5ex);
    	\node[anchor=south east] at (6,0) {$z$};
    	\draw(5,0)--(7,0);
    	\draw(6,1)--(6,-1);
    	\node[anchor=south] at (5,0) {$i$};
    	\node[anchor=south] at (7,0) {$j$};
    	\node[anchor=south] at (6,1) {{$i^{\boldsymbol{n_1}}\cdot i^{\boldsymbol{n_3}}$}};
    	\node[anchor=north] at (6,-1) {{$i^{\boldsymbol{n_2}}\cdot i^{\boldsymbol{n_4}}$}};
    	
    	\node[anchor=north] at (8,0) {.};
    	
    	\end{tikzpicture}
    \end{center}
    and defines $z\in\mathbb{R}^{\cdots\times a \times b \times\cdots}$.
    For $a=b$ Operation~\eqref{eq:hadamard} is represented by 
    \begin{center}
    	\begin{tikzpicture}
    	\draw[black,fill=black] (0,0) circle (0.5ex);
    	\node[anchor=south east] at (0,0) {$x$};
    	\draw(1,0.5)--(0,0);
    	\draw(0,1)--(0,-1);
    
        \node[anchor=south] at (0,1) {{$i^{\boldsymbol{n_1}}$}};	
    	\node[anchor=north] at (0,-1) {{$i^{\boldsymbol{n_2}}$}};
    	\node at (0.5,0) {$i$};
    	\draw[black,fill=black] (2,0) circle (0.5ex);
    	\node[anchor=south west] at (2,0) {$y$};
    	\draw(2,0)--(1,0.5);
    	\draw(2,1)--(2,-1);
    	\node[anchor=south] at (2,1) {$i^{\boldsymbol{n_3}}$};	
    	\node[anchor=north] at (2,-1) {$i^{\boldsymbol{n_4}}$};
    	
    	\node at (1.5,0) {$i$};
    	\draw(1,0.5)--(1,1);
    	
    	\node[anchor=south east] at (1,0.5) {$i$};
    	
    	\node at (4,0) {$=$};
    	
    	\draw[black,fill=black] (6,0) circle (0.5ex);
    	\node[anchor=south east] at (6,0) {$z$};
    	\draw(5,0)--(6,0);
    	\draw(6,1)--(6,-1);
    	\node[anchor=south] at (5,0) {$i$};
    
    	\node[anchor=south] at (6,1) {{$i^{\boldsymbol{n_1}}\cdot i^{\boldsymbol{n_3}}$}};
    	\node[anchor=north] at (6,-1) {{$i^{\boldsymbol{n_2}}\cdot i^{\boldsymbol{n_4}}$}};

    	\node[anchor=north] at (8,0) {.};
    	
    	\end{tikzpicture}
    \end{center}		
    and defines $z\in\mathbb{R}^{\cdots\times a  \times \cdots}$ and Operation~\eqref{eq:inner} defines $z\in\mathbb{R}^{\cdots \times \cdots}$ by
    \begin{center}
    	\begin{tikzpicture}
    	\draw[black,fill=black] (0,0) circle (0.5ex);
    	\node[anchor=south east] at (0,0) {$x$};
    	\draw(1,0)--(0,0);
    	\draw(0,1)--(0,-1);
    	
    	\node[anchor=south] at (0,1) {{$i^{\boldsymbol{n_1}}$}};	
    	\node[anchor=north] at (0,-1) {{$i^{\boldsymbol{n_2}}$}};
    	\node[anchor=north] at (0.5,0) {$i$};
    	\draw[black,fill=black] (2,0) circle (0.5ex);
    	\node[anchor=south west] at (2,0) {$y$};
    	\draw(2,0)--(1,0);
    	\draw(2,1)--(2,-1);
    	\node[anchor=south] at (2,1) {$i^{\boldsymbol{n_3}}$};	
    	\node[anchor=north] at (2,-1) {$i^{\boldsymbol{n_4}}$};
    	
    	\node[anchor=north] at (1.5,0) {$i$};
    	
    	\node at (4,0) {$=$};
    	
    	\draw[black,fill=black] (6,0) circle (0.5ex);
    	\node[anchor=south east] at (6,0) {$z$};
    	\draw(6,1)--(6,-1);
    	
    	\node[anchor=south] at (6,1) {{$i^{\boldsymbol{n_1}}\cdot i^{\boldsymbol{n_3}}$}};
    	\node[anchor=north] at (6,-1) {{$i^{\boldsymbol{n_2}}\cdot i^{\boldsymbol{n_4}}$}};

    	\node[anchor=north] at (8,0) {.};
    	
    	\end{tikzpicture}
    \end{center}
\end{definition}
With these definitions we can compose entire networks of multiple tensors which are called tensor networks.

\subsection{The Tensor Train Format}\label{subsec:tensortrains}

A prominent example of a tensor network is the \textit{tensor train (TT)}~\cite{oseledets_tensor-train_2011,holtz_alternating_2012}, which is the main tensor network used throughout this work.
This network is discussed in the following subsection.
\begin{definition}
    Let $\boldsymbol{n}$ be an dimensional tuple of order-$d$.
    The TT format decomposes an order $d$ tensor $x\in\mathbb{R}^{\boldsymbol{n}}$ into $d$ \emph{component tensors} $x_k\in\mathbb{R}^{r_{k-1}\times n_k\times r_k}$ for $k=1,\ldots,d$ with $r_0 = r_d = 1$.
    This can be written in tensor network formula notation as 
    \begin{equation*}
        x(i_1,\cdots,i_d) = x_1(i_1,j_1)\cdot x_2(j_1,i_2,j_2)\cdots x_d(j_{d-1},i_d).
    \end{equation*}
    The tuple $(r_1,\ldots,r_{d-1})$ is called the \emph{representation rank} of this representation.
\end{definition}
In graphical notation it looks like this
\begin{center}
	\begin{tikzpicture}
	\draw[black,fill=black] (0,0) circle (0.5ex);
	\node[anchor=south east] at (0,0) {$x$};
	\draw(-1,0)--(1,0);
	\draw(0,1)--(0,-1);
	\node[anchor=south] at (-1,0) {$i_1$};
	\node[anchor=west] at (0,1) {$i_2$};	\node[anchor=north] at (1,0) {$i_3$};
	\node at (0.5,-0.5) {\reflectbox{$\ddots$}};
	\node[anchor=east] at (0,-1) {$i_d$};
	
	\node[anchor=east] at (2,0) {$=$};

	\draw[black,fill=black] (3,0) circle (0.5ex);	
	\draw[black,fill=black] (4,0) circle (0.5ex);	
	\draw[black,fill=black] (5,0) circle (0.5ex);
	\node at (6,0) {\reflectbox{$\cdots$}};	
	\draw[black,fill=black] (7,0) circle (0.5ex);
	
	\draw(3,0)--(5.5,0);
	\draw(6.5,0)--(7,0);
	\draw(3,0)--(3,-0.75);
	\draw(4,0)--(4,-0.75);
	\draw(5,0)--(5,-0.75);
	\draw(7,0)--(7,-0.75);	
	
	\node[anchor=south] at (3,0) {$x_1$};
	\node[anchor=south] at (4,0) {$x_2$};	\node[anchor=south] at (5,0) {$x_3$};
	\node[anchor=south] at (7,0) {$x_d$};
	\node[anchor=north] at (3,-0.75) {$i_1$};
	\node[anchor=north] at (4,-0.75) {$i_2$};	\node[anchor=north] at (5,-0.75) {$i_3$};
	\node[anchor=north] at (7,-0.75) {$i_d$};
	\node[anchor=north] at (3.5,0) {$j_1$};
	\node[anchor=north] at (4.5,0) {$j_2$};	\node[anchor=north] at (5.5,0) {$j_3$};
	\node[anchor=north] at (6.5,0) {$j_{d-1}$};
	\end{tikzpicture}
\end{center}
\begin{remark}\label{rmk:TT_representation}
    Note that this representation is not unique.
    For any pair of matrices $(A,B)$ that satisfies $AB=\operatorname{Id}$ we can replace $x_k$ by $x_k(i_1,i_2,j)\cdot A(j,i_3)$ and $x_{k+1}$ by $B(i_1,j)\cdot x(j,i_2,i_3)$ without changing the tensor $x$.
\end{remark}
The representation rank of $x$ is therefore dependent on the specific representation of $x$ as a TT, hence the name.
Analogous to the concept of matrix rank we can define a minimal necessary rank that is required to represent a tensor $x$ in the TT format.
\begin{definition} 
	The \textit{tensor train rank} of a tensor $x\in\mathbb{R}^{\boldsymbol{n}}$ with tensor train components $x_1\in\mathbb{R}^{n_1\times r_1}$, $x_k\in\mathbb{R}^{r_{k-1}\times n_k\times r_k}$ for $k=2,\ldots,d-1$ and $x_d\in\mathbb{R}^{r_{d-1}\times n_d}$ is the set  
	\[
    	 \text{TT-rank}(x) = (r_1,\cdots,r_d)
	\]
	of minimal $r_k$'s such that the $x_k$ compose $x$.
\end{definition}
In \cite[Theorem 1a]{holtz_manifolds_2012} it is shown that the TT-rank can be computed by simple matrix operations.
Namely, $r_k$ can be computed by joining the first $k$ indices and the remaining $d-k$ indices and computing the rank of the resulting matrix.
At last, we need to introduce the concept of left and right orthogonality for the tensor train format. 
\begin{definition}
    Let $x\in\mathbb{R}^{\boldsymbol{m}\times n}$ be a tensor of order $d+1$.
    We call $x$ \emph{left orthogonal} if
    \[
        x(i^{\boldsymbol{m}},j_1) \cdot x(i^{\boldsymbol{m}},j_2)
        = \operatorname{Id}(j_1,j_2) .
    \]
    Similarly, we call a tensor $x\in\mathbb{R}^{m\times\boldsymbol{n}}$ of order $d+1$ \emph{right orthogonal} if 
    \[
        x(i_1, j^{\boldsymbol{n}}) \cdot x(i_2, j^{\boldsymbol{n}})
        = \operatorname{Id}(i_1,i_2) .
    \]
    A tensor train is \emph{left orthogonal} if all component tensors $x_1,\ldots,x_{d-1}$ are left orthogonal.
    It is \emph{right orthogonal} if all component tensors $x_2,\ldots,x_d$ are right orthogonal.
\end{definition}

\begin{lemma}[\cite{oseledets_tensor-train_2011}]
	For every tensor $x\in\mathbb{R}^{\boldsymbol{n}}$ of order $d$ we can find left and right orthogonal decompositions. 
\end{lemma}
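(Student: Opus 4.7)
The plan is to construct both decompositions by successive thin QR factorizations of appropriately matricized tensors, sweeping from one end of the train to the other. I describe the left-orthogonal construction; the right-orthogonal one is obtained by mirroring the argument.

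Starting from an arbitrary TT representation of $x$ (for instance, the trivial one with $x_1 = x$ reshaped as an $n_1 \times (n_2 \cdots n_d)$ matrix padded with trivial cores, or any representation guaranteed to exist), I form the matricization $X^{(1)} \in \mathbb{R}^{n_1 \times (n_2 \cdots n_d)}$ by joining indices $i_2,\ldots,i_d$ via the product of Definition~\ref{def:indexproduct}. A thin QR factorization $X^{(1)} = Q_1 R_1$ with $Q_1 \in \mathbb{R}^{n_1 \times r_1}$ and $R_1 \in \mathbb{R}^{r_1 \times (n_2 \cdots n_d)}$ defines the first component $x_1 := Q_1$. The identity $Q_1^\top Q_1 = \operatorname{Id}$ translates directly to $x_1(i_1, j_1) \cdot x_1(i_1, j_2) = \operatorname{Id}(j_1, j_2)$, which is exactly left orthogonality in the sense of the definition. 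Next, I reshape $R_1$ into a matrix $\tilde R_1 \in \mathbb{R}^{(r_1 \cdot n_2) \times (n_3 \cdots n_d)}$, take its QR factorization $\tilde R_1 = Q_2 R_2$, and reshape $Q_2$ into $x_2 \in \mathbb{R}^{r_1 \times n_2 \times r_2}$. The column-orthonormality of $Q_2$ then reads $x_2(i_1, i_2, j_1) \cdot x_2(i_1, i_2, j_2) = \operatorname{Id}(j_1, j_2)$, i.e.\ $x_2$ is left orthogonal with $\boldsymbol{m} = (r_1, n_2)$.

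Iterating for $k = 3, \ldots, d-1$ produces left-orthogonal cores $x_k$ and a residual matrix $R_{d-1} \in \mathbb{R}^{r_{d-1} \times n_d}$, which I set as the final core $x_d$ (no orthogonality required). Contracting all cores back together undoes the reshapings and QR factorizations and yields the original tensor $x$, so the procedure indeed delivers a left-orthogonal TT decomposition. The right-orthogonal case is entirely symmetric: one QR-factorizes the opposite matricization first, using an LQ (equivalently, the QR of the transpose) factorization, and proceeds from $k = d$ down to $k = 2$, producing cores satisfying $x_k(i_1, j^{\boldsymbol{n}}) \cdot x_k(i_2, j^{\boldsymbol{n}}) = \operatorname{Id}(i_1, i_2)$ with $\boldsymbol{n} = (n_k, r_k)$.

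There is no real obstacle in this proof; the only point requiring care is bookkeeping, namely verifying that after each reshaping the QR identity $Q^\top Q = \operatorname{Id}$ corresponds precisely to the orthogonality relation in the definition with the correct grouping of indices. Once the index grouping is matched to the matricization used in the QR step, each core inherits orthogonality by construction, and the contraction identity follows because at every step $Q_k R_k$ reproduces the matrix that was factorized.
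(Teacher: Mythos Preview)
Your construction is correct and is precisely the standard successive-QR argument from \cite{oseledets_tensor-train_2011}; the paper does not give its own proof of this lemma but merely cites that reference, so there is nothing further to compare.
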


For technical purposes it is also useful to define the so-called \textit{interface tensors}, which are based on left and right orthogonal decompositions.
\begin{definition}
	Let $x$ be a tensor train of order $d$ with rank tuple $\boldsymbol{r}$. %
	For every $k=1,\ldots,d$ and $\ell=1,\ldots,r_k$,
	the $\ell$-th \textit{left interface vector} is given by
	\begin{equation*}
    	\rmapleqq{k,\ell}{x}(i_1,i_2,\cdots,i_k)
    	= x_1(i_1,j_1)\cdots x_k(j_{k-1},i_k,\ell)
	\end{equation*}
	where $x$ is assumed to be left orthogonal.
	The $\ell$-th \textit{right interface vector} is given by
	\begin{equation*}
    	\rmapgeqq{k+1,\ell}{x}(i_{k+1},\cdots,i_d)
    	= x_{k+1}(\ell,i_{k+1},j_{k+1})\cdots x_d(j_{d-1},i_d)
	\end{equation*}
	where $x$ is assumed to be right orthogonal.
\end{definition}

\subsection{Sets of Polynomials} \label{subsec:sets}

In this section we specify the setup for our method and define the majority of the different sets of polynomials that are used.
We start by defining dictionaries of one dimensional functions which we then use to construct the different sets of high-dimensional functions.
\begin{definition}
	Let $p\in\mathbb{N}$ be given.
	A function dictionary of size $p$ is a vector valued function $\Psi:\mathbb{R} \rightarrow \mathbb{R}^{p}$.
\end{definition}
\begin{example}\label{ex:monomials}
	Two simple examples of a function dictionary that we use in this work are given by the monomial basis of dimension $p$, i.e. 
    \begin{equation}\label{eq:monomial}
    	\Psi_{\text{monomial}}(x) = \begin{pmatrix}1&x&x^2&\cdots&x^{p-1}\end{pmatrix}^T
	\end{equation}
	and by the basis of the first $p$ Legendre polynomials, i.e.
    \begin{equation}\label{eq:legendre}
    	\Psi_{\text{Legendre}}(x) = \begin{pmatrix}1&x&\frac 12 (3x^2-1)&\frac 12 (5x^3-3x)&\cdots\end{pmatrix}^T .
	\end{equation}
\end{example}
Using function dictionaries we can define the following high-dimensional space of multivariate functions.
	Let $\Psi$ be a function dictionary of size $p\in\mathbb{N}$.
	The $d$-th order product space that corresponds to the function dictionary $\Psi$ is 
    \begin{equation}\label{eq:ansatzspace}
        V_p^d := \left\langle\bigotimes_{k=1}^d \Psi_{m_k} \,:\, \boldsymbol{m}\in\mathbb{N}_p^d \right\rangle .
    \end{equation}
This means that every function $u\in V_p^d$ can be written as
\begin{equation} \label{eq:Vp_contraction}
    u(x_1,\ldots, x_d) =  c(i_1,\ldots,i_d) \prod_{k=1}^d \Psi(x_k)(i_k)
\end{equation}
with a coefficient tensor $c\in\mathbb{R}^{\boldsymbol{p}}$ where $\boldsymbol{p} = (p,\ldots,p)$ is a dimension tuple of order $d$.
Note that equation~\eqref{eq:Vp_contraction} uses the index notation from Definition~\ref{def:productlike} with arbitrary but fixed $x_k$'s.
Since $\mathbb{R}^{\boldsymbol{p}}$ is an intractably large space, it makes sense for numerical purposes to consider the subset
\begin{equation}\label{eq:TTspace}
    T_r(V_p^d) := \{ u\in V_p^d\,:\, \text{TT-rank}(c)\leq r\}
\end{equation}
where the TT rank of the coefficient is bounded.
Every $u\in T_r(V_p^d)$ can thus be represented graphically as
\begin{equation} \label{eq:Vp_contraction_TT}
    \begin{tikzpicture}[baseline=(current  bounding  box.center), inner sep=5]
	\draw[black,fill=black] (0.6,0) circle (0.5ex);
	\node[anchor=east] at (0.6,0) {$u(x_1,\ldots,x_d)$};
	\draw[black,fill=black] (3,0) circle (0.5ex);
	\draw[black,fill=black] (4,0) circle (0.5ex);
	\draw[black,fill=black] (5,0) circle (0.5ex);
	\draw[black,fill=black] (7,0) circle (0.5ex);
	
	\node[anchor=east] at (2,0) {$=$};
    
	\node at (6,0) {$\cdots$};
	\draw(3,-0)--(5.5,0);
	\draw(7,-0)--(6.5,0);
	\draw(3,-0.75)--(3,0);
	\draw(4,-0.75)--(4,0);
	\draw(5,-0.75)--(5,0);
	\draw(7,-0.75)--(7,0);
	\draw[black,fill=black] (3,-0.75) circle (0.5ex);
	\draw[black,fill=black] (4,-0.75) circle (0.5ex);	
	\draw[black,fill=black] (5,-0.75) circle (0.5ex);	
	\draw[black,fill=black] (7,-0.75) circle (0.5ex);	
	
	\node[anchor=south] at (3,0.) {$C_1$};
	\node[anchor=south] at (4,0.) {$C_2$};
	\node[anchor=south] at (5,0.) {$C_3$};
	\node[anchor=south] at (7,0.) {$C_d$};
    
	\node[anchor=north] at (3,-0.75) {$\Psi(x_1)$};
	\node[anchor=north] at (4,-0.75) {$\Psi(x_2)$};
	\node[anchor=north] at (5,-0.75) {$\Psi(x_3)$};
	\node[anchor=north] at (7,-0.75) {$\Psi(x_d)$};
	\node[anchor=north] at (7.5,-0.) {$.$};
	\end{tikzpicture}
\end{equation}
where the $C_k$'s are the components of the tensor train representation of the coefficient tensor $c\in\mathbb{R}^{\boldsymbol{p}}$ of $u\in V_p^d$.
\begin{remark}\label{rmk:tts_are_polynomials}
    In this way every tensor $c\in\mathbb{R}^{\boldsymbol{p}}$ (in the tensor train format) corresponds one to one to a function $u\in V_p^d$.
\end{remark}
An important subspace of $V_p^d$ is the space of homogeneous polynomials.
For the purpose of this paper we define the subspace of homogeneous polynomials of degree $g$ as the space
\begin{equation} \label{eq:homogeneousspace}
    W_g^d := \left\langle\bigotimes_{k=1}^d\Psi_{m_k} \,:\, \boldsymbol{m}\in\mathbb{N}_p^d \ \ \,\text{and}\ \,\sum_{k=1}^d m_k = d+g \right\rangle .
\end{equation}
From this definition it is easy to see that a homogeneous polynomial of degree $g$ can be represented as an element of $V_p^d$ where the coefficient tensor $c$ satisfies 
\[
    c(m_1,\ldots,m_d) = 0, \quad \sum_{k=1}^d m_k \neq d+g .
\]
In Section~\ref{sec:knownresults} we will introduce an efficient representation of such coefficient tensors $c$ in a block sparse tensor format.\\
Using $W_g^d$ we can also define the space of polynomials of degree at most $g$ by
\begin{equation}\label{eq:degbddpolyspace}
    S_{g}^d = \bigoplus_{\tilde{g}=0}^g W_{\tilde{g}}^d .
\end{equation}
Based on this characterization we will define a block-sparse tensor train version of this space in Section~\ref{sec:knownresults}.

\subsection{Parametrizing homogeneous polynomials by symmetric tensors}

In algebraic geometry the space $W_g^d$ is considered classically only for the dictionary $\Psi_{\mathrm{monomial}}$ of monomials and is typically parameterized by a symmetric tensor
\begin{equation} \label{eq:symmetric_polynomial}
    u(x) = B(i_1, \ldots, i_g) \cdot x(i_1) \cdots x(i_g) ,\quad x\in\mathbb{R}^d
\end{equation}
where $\boldsymbol{d} = (d,\ldots,d)$ is a dimension tuple  of order $g$ and $B\in\mathbb{R}^{\boldsymbol{d}}$ satisfies $B(m_1,\ldots,m_g) = B(\sigma(m_1,\ldots,m_g))$ for every permutation $\sigma$ in the symmetric group $S_g$.
We conclude this section by showing how the representation~\eqref{eq:Vp_contraction} can be calculated from the symmetric tensor representation~\eqref{eq:symmetric_polynomial}, and vice versa.
By equating coefficients we find that for every $(m_1,\ldots,m_d)\in\mathbb{N}_p^d$ either $m_1+\cdots+m_d \ne d+g$ and $c(m_1,\ldots,m_d) = 0$ or
\[
    c(m_1,\ldots,m_d) %
    = \sum_{\{\sigma(\boldsymbol{n})\,:\,\sigma\in S_g\}} B(\sigma(n_1,\ldots,n_g)) %
    \quad\text{where}\quad
    (n_1,\ldots,n_g)
    = (\underbrace{1, \ldots, 1}_{m_1-1\text{ times}}, \underbrace{2, \ldots, 2}_{m_2-1\text{ times}}, \ldots)
    \in \mathbb{N}_d^g . %
\]
Since $B$ is symmetric the sum simplifies to
\[
    \sum_{\{\sigma(\boldsymbol{n})\,:\,\sigma\in S_g\}} B(\sigma(n_1,\ldots,n_g)) %
    = \binom{g}{m_1-1,\ldots,m_d-1}B(n_1,\ldots,n_g).
\]
From this follows that for $(n_1,\ldots,n_g)\in\mathbb{N}_d^g$
\[
    B(n_1,\ldots,n_g) = \frac{1}{\binom{g}{m_1-1,\ldots,m_d-1}} c(m_1,\ldots,m_d)
    \quad\text{where}\quad
    m_k = 1+\sum_{\ell = 1}^g \delta_{k, n_\ell}
    \quad\text{for all}\quad k=1,\dots,d
\]
and $\delta_{k,\ell}$ denotes the \textit{Kronecker delta}.
This demonstrates how our approach can alleviate the difficulties that arise when symmetric tensors are represented in the hierarchical tucker format~\cite{hackbush_2016_symmetric} in a very simple fashion.

\subsection{Least Square}\label{subsec:leastsq}

Let in the following $V_p^d$ be the product space of a function dictionary $\Psi$ such that $V_p^d \subseteq L_2(\Omega)$.
Consider a high-dimensional function $f\in L_2(\Omega)$ on some domain $\Omega\subset \mathbb{R}^d$ and assume that the point-wise evaluation $f(x)$ is well-defined for $x\in\Omega$.
In practice it is often possible to choose $\Omega$ as a product domain $\Omega = \Omega_1\times\Omega_2\times\cdots\Omega_d$ by extending $f$ accordingly.
To find the best approximation $u_{W}$ of $f$ in the space $W\subseteq V_p^d$ we then need to solve the problem
\begin{align}\label{eq:L2minlsq}
	u_{W} = \argmin_{u\in W} \|f - u\|_{L_2(\Omega)}^2.
\end{align}
A practical problem that often arises when computing $u_{W}$ is that computing the $L_2(\Omega)$-norm is intractable for large $d$.
Instead of using classical quadrature rules one often resorts to a Monte Carlo estimation of the high-dimensional integral.
This means one draws $M$ random samples $\{x^{(m)}\}_{m=1,\ldots,M}$ from $\Omega$ and estimates
\begin{align*}
    \|f - u\|_{L_2(\Omega)}^2 \approx \frac{1}{M} \sum_{m = 1}^M\|f(x^{(m)}) - u(x^{(m)})\|_{\mathrm{F}}^2.
\end{align*}
With this approximation we can define an empirical version of $u_{W}$ as
\begin{equation}\label{eq:L2minlsq_empirical}
    u_{W,M} = \argmin_{u\in W} \frac{1}{M} \sum_{m = 1}^M\|f(x^{(m)}) - u(x^{(m)})\|_{\mathrm{F}}^2.
\end{equation}
For a linear space $W$, computing $u_{W,M}$ amounts to solving a linear system and does not pose an algorithmic problem.
We use the remainder of this section to comment on the minimization problem~\eqref{eq:L2minlsq_empirical} when a set of tensor trains is used instead. \\
Given samples $(x^{(m)})_{m = 1,\ldots,M}$ we can evaluate $u\in V_p^d$ for each $x^{(m)} = (x^{(m)}_{1},\ldots,x^{(m)}_{d})$ using equation~\eqref{eq:Vp_contraction}.
If the coefficient tensor $c$ of $u$ can be represented in the TT format then we can use equation~\eqref{eq:Vp_contraction_TT} to perform this evaluation efficiently for all samples $(x^{(m)})_{m=1,\ldots,M}$ at once.
For this we introduce for each $k=1,\ldots,d$ the matrix
\begin{equation}\label{eq:measurement_matrix}
    \Xi_k = \begin{pmatrix}
        \Psi(x^{(1)}_{k}) & \cdots & \Psi(x^{(M)}_{k})
    \end{pmatrix}\in\mathbb{R}^{p\times M}.
\end{equation}
Then the $M$-dimensional vector of evaluations of $u$ at all given sample points is given by 
\begin{center}
	\begin{tikzpicture}
		\begin{scope}[scale=0.75]
		\draw[black,fill=black] (0,0) circle (0.5ex);	
		\draw[black,fill=black] (0,1) circle (0.5ex);	
		\draw[black,fill=black] (0,-2) circle (0.5ex);	
	
		\draw[black,fill=black] (-1,0) circle (0.5ex);	
		\draw[black,fill=black] (-1,1) circle (0.5ex);	
		\draw[black,fill=black] (-1,-2) circle (0.5ex);	
	
		\draw (-4,-0.5)--(-3,-0.5)--(-1,1)--(0,1)--(0,-0.5);
		\draw (-3,-0.5)--(-1,0)--(0,0);
		\draw (-3,-0.5)--(-1,-2)--(0,-2)--(0,-1.5);
		
		\node at (-1,-1) {$\vdots$};
		\node at (0,-1) {$\vdots$};
		
		\node[anchor=west] at (0,1) {$C_1$};
		\node[anchor=west] at (0,0) {$C_2$};
		\node[anchor=west] at (0,-2) {$C_d$};
	
		\node[anchor=south east] at (-1,1) {$\Xi_1$};
		\node[anchor=south east] at (-1,0) {$\Xi_2$};
		\node[anchor=north east] at (-1,-2) {$\Xi_d$};
		\end{scope}
	\end{tikzpicture}
\end{center}
where we use Operation~\eqref{eq:hadamard} to join the different $M$-dimensional indices.
The alternating least-squares algorithm cyclically updates each component tensor $C_k$ by minimizing the residual corresponding to this contraction.
To formalize this we define the operator $\Phi_k\in\mathbb{R}^{M\times r_{k-1}\times n_k\times r_k}$ as
\begin{equation}\label{tikz:localcontraction}
\begin{tikzpicture}[baseline=(current  bounding  box.center)]	\begin{scope}[scale=0.75]
	\draw[black,fill=black] (0,0) circle (0.5ex);	
	\draw[black,fill=black] (0,-2) circle (0.5ex);	
		\draw[black,fill=black] (0,-4) circle (0.5ex);		\draw[black,fill=black] (0,-6) circle (0.5ex);	
	
	\draw[black,fill=black] (-1,0) circle (0.5ex);	
	\draw[black,fill=black] (-1,-2) circle (0.5ex);	
	\draw[black,fill=black] (-1,-3) circle (0.5ex);		\draw[black,fill=black] (-1,-4) circle (0.5ex);		\draw[black,fill=black] (-1,-6) circle (0.5ex);	
	
	\draw (-4,-3)--(-3,-3)--(-1,0)--(0,0)--(0,-0.5);
	\draw (-3,-3)--(-1,-2)--(0,-2);
	\draw (-3,-3)--(-1,-3)--(-0.5,-3);
	\draw (-3,-3)--(-1,-4)--(0,-4);
	\draw (-3,-3)--(-1,-6)--(0,-6)--(0,-5.5);
	\draw (0,-1.5)--(0,-2.5);
	\draw (0,-3.5)--(0,-4.5);
	
	\node at (-1,-1) {$\vdots$};
	\node at (0,-1) {$\vdots$};
	\node at (-1,-5) {$\vdots$};
	\node at (0,-5) {$\vdots$};
	
	\node[anchor=west] at (0,0) {$C_1$};
	\node[anchor=west] at (0,-2) {$C_{k-1}$};
	\node[anchor=west] at (0,-4) {$C_{k+1}$};
	\node[anchor=west] at (0,-6) {$C_d$};
	
	\node[anchor=south east] at (-1,0) {$\Xi_1$};
	\node[anchor=south east] at (-1,-2) {$\Xi_{k-1}$};
	\node[anchor=south east] at (-1,-3) {$\Xi_k$};
	\node[anchor=north east] at (-1,-4) {$\Xi_{k+1}$};
	\node[anchor=north east] at (-1,-6) {$\Xi_d$};
	
	\draw[dashed] (-2.5,-2.25) rectangle (1,0.55);
	\draw[dashed] (-2.5,-3.75) rectangle (1,-6.55);
	
	\node at (-4.5,-3) {$=$};
	
	\draw[black,fill=black] (-6,-3) circle (0.5ex);	
	\node[anchor=south east] at (-6,-3) {$\Phi_k$};
	\draw (-6.7,-3)--(-5.3,-3);
	\draw (-5.3,-2.25)--(-6,-3)--(-5.3,-3.75);
	\end{scope}
	\end{tikzpicture}
	.
\end{equation}	
Then the update for $C_k$ is given by a minimal residual solution of the linear system 
\[
    \Phi_k(j,i_1, i_2, i_3)\cdot C_k(i_1,i_2,i_3) = F(j)
\]
where $F(m) := y^{(m)} := f(x^{(m)})$ and  $i_1,i_2,i_3,j$ are symbolic indices of dimensions $r_{k-1},n_k,r_k,M$, respectively.
The particular algorithm that is used for this minimization may be adapted to the problem at hand.
These contractions are the basis for our algorithms in Section~\ref{sec:method}.
We refer to~\cite{holtz_alternating_2012} for more details on the ALS algorithm.
Note that it is possible to reuse parts of the contractions in $\Phi_k$ through so called \emph{stacks}.
In this way not the entire contraction has to be computed for every $k$.
The dashed boxes mark the parts of the contraction that can be reused.

\section{Theoretical Foundation}\label{sec:knownresults}

\subsection{Sample Complexity} \label{subsec:sample_complexity}

The quality of the solution $u_{W,M}$ of~\eqref{eq:L2minlsq_empirical} in relation to $u_{W}$ is subject to tremendous interest on the part of the mathematics community.
Two particular papers that consider this problem are~\cite{cohen_optimal_2017} and~\cite{eigel_convergence_2020}.
While the former provides sharper error bounds for the case of linear ansatz spaces the latter generalizes the work and is applicable to tensor network spaces. 
We now recall the relevant result for convenience.
\begin{proposition}[\cite{eigel_convergence_2020}] \label{prop:var_const}
    Define the \emph{variation constant}
    \begin{equation*}
        K(A) := \sup_{v\in A\setminus\{0\}} \frac{\|v\|_{L^\infty(\Omega)}^2}{\|v\|^2_{L^2(\Omega)}}.
    \end{equation*}
    Then for any $W$ with $k := \max\{K(\{f-u_{W}\}), K(\{u_{W}\} - W)\} < \infty$ it holds that
    \begin{equation*}
        \mathbb{P}\left[\|f - u_{W,M}\|_{L^2(\Omega)} \lesssim \|f - u_{W}\|_{L^2(\Omega)}\right] \ge 1 - q
    \end{equation*}
    where $q$ decreases exponentially with $\ln(q)\in \mathcal{O}(-n k^{-2})$.
\end{proposition}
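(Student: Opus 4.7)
The plan is to exploit the empirical optimality of $u_{W,M}$ and then use the variation constant $K$ to upgrade empirical norms to $L^2$ norms via concentration of measure. The key observation is that $K(A)\le k$ means every $v\in A$ satisfies $|v(x)|\le \sqrt{k}\,\|v\|_{L^2(\Omega)}$ pointwise almost surely, which is precisely the boundedness needed for a Bernstein-type inequality to give exponential tail decay in the number of samples $M$.

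First I would reduce the problem to controlling a difference term. Writing $\langle \cdot,\cdot\rangle_M$ and $\|\cdot\|_M$ for the empirical inner product and norm, the definition of $u_{W,M}$ in~\eqref{eq:L2minlsq_empirical} yields $\|f-u_{W,M}\|_M^2 \le \|f-u_W\|_M^2$. Setting $v := u_{W,M}-u_W$, which lies in the set $W - \{u_W\}$, and expanding $f-u_{W,M} = (f-u_W) - v$ gives
\begin{equation*}
    \|v\|_M^2 \le 2\,\langle f-u_W,\,v\rangle_M .
\end{equation*}
From here the goal is to pass to $L^2$: bound $\|v\|_{L^2(\Omega)}^2$ from above by $\|v\|_M^2$, and bound $\langle f-u_W, v\rangle_M$ by $\|f-u_W\|_{L^2(\Omega)}\|v\|_{L^2(\Omega)}$ up to constants, so that $\|v\|_{L^2(\Omega)} \lesssim \|f-u_W\|_{L^2(\Omega)}$ falls out and the triangle inequality delivers the conclusion.

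Second I would establish the two required high-probability estimates. For any fixed $v\in \{u_W\}-W$, the random variables $Y_m = v(x^{(m)})^2$ and $Z_m = (f-u_W)(x^{(m)})\,v(x^{(m)})$ are bounded by $k\,\|v\|_{L^2(\Omega)}^2$ and $k\,\|f-u_W\|_{L^2(\Omega)}\|v\|_{L^2(\Omega)}$ respectively, with variances of the same order. Bernstein's inequality then gives, for each fixed $v$,
\begin{equation*}
    \mathbb{P}\bigl[\,\tfrac{1}{2}\|v\|_{L^2(\Omega)}^2 \le \|v\|_M^2 \le \tfrac{3}{2}\|v\|_{L^2(\Omega)}^2\bigr] \ge 1 - c\exp(-c'Mk^{-2})
\end{equation*}
and an analogous Bernstein bound for the cross term. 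This already matches the exponent $-Mk^{-2}$ claimed in the proposition.

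The main obstacle is making these bounds \emph{uniform} over the nonlinear set $\{u_W\}-W$; a pointwise-in-$v$ Bernstein estimate is insufficient because $v$ is itself sample-dependent. There are two natural routes: (i) embed $\{u_W\}-W$ into a linear superspace $V\subseteq V_p^d$ with $K(V)$ of comparable size, and apply a matrix Chernoff inequality to the empirical Gram matrix of a basis of $V$, obtaining a uniform norm equivalence on $V$ from which both (a) the two-sided estimate on $\|v\|_M^2$ and (b) the Cauchy--Schwarz bound on $\langle f-u_W,v\rangle_M$ (after orthogonally decomposing $f-u_W$) follow; or (ii) work directly on $\{u_W\}-W$ via an $\varepsilon$-net of the set intersected with the $L^2$-unit ball and a union bound, using the variation constant to relate pointwise and $L^2$ distances between net points. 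Route (i) is simpler and gives the quoted rate provided the linear hull is tame; route (ii) is needed when the nonlinear set is genuinely wild, as can happen for tensor trains of fixed rank, but the covering numbers in that case are polynomial in $M$ and only inflate the rate by logarithmic factors that are absorbed into the constants. Combining the uniform estimates with the reduction in the first paragraph and a final triangle inequality $\|f-u_{W,M}\|_{L^2(\Omega)} \le \|f-u_W\|_{L^2(\Omega)} + \|v\|_{L^2(\Omega)}$ closes the argument.
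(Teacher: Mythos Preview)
The paper does not prove this proposition; it is quoted from~\cite{eigel_convergence_2020} and used as a black box. There is therefore no argument in the present paper to compare your proposal against.

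Your sketch follows the standard template for such results and is broadly in line with the cited work: empirical optimality gives $\|v\|_M^2 \le 2\langle f-u_W, v\rangle_M$ for $v = u_{W,M}-u_W$, concentration upgrades empirical quantities to their $L^2$ counterparts, and uniformity over the (possibly nonlinear) set $\{u_W\}-W$ is handled either by a matrix Chernoff bound on a linear superspace or by a covering argument. One point deserves more care, however. Your Bernstein estimate for a \emph{fixed} $v$ actually yields an exponent of order $-Mk^{-1}$, not $-Mk^{-2}$: with $Y_m=v(x^{(m)})^2$ one has $|Y_m|\le k\|v\|_{L^2}^2$ and $\operatorname{Var}(Y_m)\le \|v\|_{L^\infty}^2\|v\|_{L^2}^2\le k\|v\|_{L^2}^4$, so taking $t=\tfrac12\|v\|_{L^2}^2$ in Bernstein gives $\exp(-cM/k)$. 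The $k^{-2}$ rate in the proposition must therefore enter through the uniformization step, and your proposal does not make explicit where that additional factor of $k$ is incurred. In route~(ii) you should also note that the variation-constant bound $\|\cdot\|_{L^\infty}\le\sqrt{k}\,\|\cdot\|_{L^2}$ is only assumed on $\{u_W\}-W$, not on differences of two elements of that set, so transferring a pointwise Bernstein bound from a net point to a nearby $v$ requires an extra assumption or a different organization of the covering.
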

Note that the value of $k$ depends only on $f$ and on the set $W$ but not on the particular choice of representation of $W$.
However, the variation constant of spaces like $V_p^d$ still depends on the underlying dictionary $\Psi$.
Although the proposition indicates that a low value of $k$ is necessary to achieve a fast convergence the tensor product spaces $V_p^d$ considered thus far does not exhibit a small variation constant.
The consequence of Proposition~\ref{prop:var_const} is that elements of this space are hard to learn in general and may require an infeasible number of samples.
To see this consider $\Omega = [-1,1]^d$ and the function dictionary $\Psi_{\mathrm{Legendre}}$ of Legendre polynomials~\eqref{eq:legendre} and let $\{P_{\boldsymbol{\ell}}\}_{\boldsymbol{\ell}\in L}$ be an orthonormal basis for some linear subspace $V\subseteq V_p^d$.
Then we can show that
\begin{equation} \label{eq:Kbound}
    K(V)
    = \sup_{x\in\Omega} \sum_{\boldsymbol{\ell}\in L} P_{\boldsymbol{\ell}}(x)^2
    = \sum_{\boldsymbol{\ell}\in L} \prod_{k=1}^d (2{\ell}_k+1) .
\end{equation}
by using techniques from~\cite[Section 3.1]{eigel_convergence_2020} and the fact that each $P_{\boldsymbol{\ell}}$ attains its maximum at $1$.
Using the product structure of $L=\mathbb{N}_p^d$ we can interchange the sum and product in~\eqref{eq:Kbound} and can conclude that $K(V_p^d) = p^{2d}$.
This means that we have to restrict the space $V_p^d$ to obtain an admissible variation constant.
We propose to use the space $W_g^d$ of homogeneous polynomials of degree $g$.
Spaces like this are commonly used in practical applications.
Their dimension is comparably low yet their direct sum $S_g^d$ allows for a good approximation of numerous highly regular functions given a sufficiently large polynomial degree $g$.
We can employ~\eqref{eq:Kbound} with $L = \{\boldsymbol{\ell} : |\boldsymbol{\ell}|=g\}$ to obtain the upper bound
\begin{align*}
    K(W_g^d)
    \le \binom{d-1+g}{d-1} \max_{|\boldsymbol{\ell}| = g} \prod_{k=1}^d (2{\ell}_k+1)
    \le \binom{d-1+g}{d-1} \left(2\left\lfloor \frac{g}{d}\right\rfloor + 3\right)^{g\bmod d}\left(2\left\lfloor \frac{g}{d}\right\rfloor + 1\right)^{d - g\bmod d}
\end{align*}
where the maximum is estimated by observing that $(2(\ell_1+1)+1)(2\ell_2+1) \le (2\ell_1+1)(2(\ell_2+1)+1) \Leftrightarrow \ell_2 \le \ell_1$.
For $g\le d$ this results in the simplified bound $K(W_g^d) \le (3\mathrm{e}\frac{d-1+g}{g})^g$.  %
This improves the variation constant substantially compared to the bound $K(V_p^d) \le p^{2d}$. \\
The bound for the dictionary of monomials $\Psi_{\mathrm{monomial}}$ is more involved but can theoretically be computed in the same way.
By drawing samples from an adapted sampling measure~\cite{cohen_optimal_2017} the theory in~\cite{eigel_convergence_2020} ensures that $K(V) = \operatorname{dim}(V)$ for all linear spaces $V$ --- independent of the underlying dictionary $\Psi$.
Using such an optimally weighted least-squares method thus leads to the bounds $K(V_p^d) = p^d$ and $K(W_g^d) = \binom{d-1+g}{d-1} \le (\mathrm{e}\frac{d-1+g}{g})^g$ for $g\le d$.

\subsection{Block Sparse Tensor Trains} \label{subsec:bstt}

Now that we have seen that it is advantagious to restrict ourselves to the space $W_g^d$ we need to find a way to do so without loosing the advantages of the tensor train format.
In~\cite{bachmayr} it was rediscovered that if a tensor train is an eigenvector of certain Laplace-like operators it admits a block sparse structure.
This means for a tensor train $c$ the components $C_k$ have zero blocks.
Furthermore, this block sparse structure is persevered under key operations, like e.g.\ the TT-SVD.
One possible operator which introduces such a structure is the Laplace-like operator 
\begin{align}\label{eq:degreeoperator}
    L = \sum_{k=1}^d  \biggl( \bigotimes_{\ell=1}^{k-1} I_{p} \biggr) \otimes \text{diag}(0,1,\ldots,p-1) \otimes \biggl( \bigotimes_{\ell=k+1}^{d} I_{p} \biggr).
\end{align}
This is the operator mentioned in the introduction encoding a quantum symmetry.
In the context of quantum mechanics this operator is known as the bosonic  particle number operator but we simply call it the degree operator.
The reason for this is that for the function dictionary of monomials $\Psi_{\text{monomial}}$ the eigenspaces of $L$ for eigenvalue $g$ are associated with homogeneous polynomials of degreee $g$.
Simply put, if the coefficient tensor $c$ for the multivariate polynomial $u\in V_p^d$ is an eigenvector of $L$ with eigenvalue $g$, then $u$ is homogeneous and the degree of $u$ is $g$.
In general there are polynomials in $V_p^d$ with degree up to $(p-1)d$.
To state the results on the block-sparse representation of the coefficient tensor we need the partial operators
\begin{align*}
    \opleq{L}{k} =& \sum_{m=1}^k  \biggl( \bigotimes_{\ell=1}^{m-1} I_{p} \biggr) \otimes \text{diag}(0,1,\ldots,p-1) \otimes \biggl( \bigotimes_{\ell=m+1}^{k} I_{p} \biggr)\\
    \opgeq{L}{k+1} =& \sum_{m=k+1}^d  \biggl( \bigotimes_{\ell=k+1}^{m-1} I_{p} \biggr) \otimes \text{diag}(0,1,\ldots,p-1) \otimes \biggl( \bigotimes_{\ell=m+1}^{d} I_{p} \biggr), 
\end{align*}
for which we have
\[
    L = \opleq{L}{k}\otimes \bigotimes_{\ell=k+1}^{d} I_{p} +  \bigotimes_{\ell=1}^{k} I_{p} \otimes \opgeq{L}{k+1}.
\]

In the following we %
adopt the notation $x=Lc$ to abbreviate the equation
\[
    x(i_1,\ldots,i_d) = L(i_1,\dots,i_d,j_1,\ldots,j_d)c(j_1,\ldots,j_d)
\]
where $L$ is a tensor operator acting on a tensor $c$ with result $x$.\\
Recall that by Remark~\ref{rmk:tts_are_polynomials} every TT corresponds to a polynomial by multiplying function dictionaries onto the cores.
This means that for every $\ell=1,\ldots,r$ the TT $\rmapleqq{k,\ell}{c}$ corresponds to a polynomial in the variables $x_1,\ldots,x_k$ and the TT $\rmapgeqq{k+1,\ell}{c}$ corresponds to a polynomial in the variables $x_{k+1},\ldots,x_d$.
In general these polynomials are not homogeneous, i.e.\ they are not eigenvectors of the degree operators $\opleq{L}{k}$ and $\opgeq{L}{k+1}$.
But since TTs are not uniquely defined (cf. Remark~\ref{rmk:TT_representation}) %
it is possible to find transformations of the component tensors $C_k$ and $C_{k+1}$ that do not change the tensor $c$ or the rank $r$ but result in a representation where each $\rmapleqq{k,\ell}{c}$ and each $\rmapgeqq{k+1,\ell}{c}$ correspond to a homogeneous polynomial.
Thus, if $c$ represents a homogeneous polynomial of degree $g$ and $\rmapleqq{k,\ell}{c}$ is homogeneous with $\operatorname{deg}(\rmapleqq{k,\ell}{c}) = \tilde{g}$ then $\rmapgeqq{k+1,\ell}{c}$ must be homogeneous with $\operatorname{deg}(\rmapgeqq{k,\ell}{c}) = g-\tilde{g}$.
This is put rigorously in the first assertion in the subsequent Theorem~\ref{thm:generalblockstructure}.
There $\mathcal{S}_{k,\tilde{g}}$ contains all the indices $\ell$ for which the reduced basis polynomials satisfy $\operatorname{deg}(\rmapleqq{k,\ell}{c}) = \tilde{g}$.
Equivalently, it groups the basis functions $\rmapgeqq{k+1,\ell}{c}$ into functions of order $g-\tilde{g}$.
The second assertion in Theorem~\ref{thm:generalblockstructure} states that we can only obtain a homogeneous polynomial of degree $\tilde{g}+m$ in the variables $x_1,\ldots,x_k$ by multiplying a homogeneous polynomial of degree $\tilde{g}$ in the variables $x_1,\ldots,x_{k-1}$ with a univariate polynomial of degree $m$ in the variable $x_k$.
This provides a constructive argument for the proof and can be used to ensure block-sparsity in the implementation.
Note that this condition forces entire blocks in the component tensor $C_k$ in equation~\eqref{eq:block_sparsity} to be zero and thus decreases the degrees of freedom.
\begin{theorem}\label{thm:generalblockstructure}\cite[Theorem 1]{bachmayr}
    Let $\boldsymbol{p} = (p,\ldots,p)$ be a dimension tuple of size $d$ and $c \in \mathbb{R}^{\boldsymbol{p}}\setminus\{0\}$, be a tensor train of rank $r = (r_1,\ldots, r_{d-1})$.
	Then $L c = g c$ if and only if $c$ has a representation with component tensors ${C_k\in\mathbb{R}^{r_{k-1}\times p\times r_k}}$ that satisfies the following two properties.
	\begin{enumerate}
	    \item For all $\tilde g \in \{0,1,\ldots,g\}$ there exist $\mathcal{S}_{k,\tilde g} \subseteq \{ 1,\ldots, r_k\}$ such that the left and right unfoldings satsify
    	\begin{equation}\label{evpi2}
    	\begin{aligned}
        	\opleq{L}{k} \rmapleqq{k,\ell}{c} &= \tilde{g} \rmapleqq{k,\ell}{c} \\
        	\opgeq{L}{k+1} \rmapgeqq{k+1,\ell}{c} &= (g-\tilde{g}) \rmapgeqq{k+1,\ell}{c}
    	\end{aligned}
    	\end{equation}
        for $\ell \in \mathcal{S}_{k,\tilde{g}}$.
    	\item The component tensors satisfy a block structure in the sets $\mathcal{S}_{k,\tilde g}$ for $m=1,\ldots p$
    	\begin{equation} \label{eq:block_sparsity}
    	    C_k(\ell_1,m,\ell_2) \ne 0  \quad\Rightarrow\quad \exists\ 0\le \tilde{g}\le g-(m-1)  :\ \ell_1\in\mathcal{S}_{k-1, \tilde{g}} \wedge \ell_2\in\mathcal{S}_{k,\tilde g+(m-1)}
    	\end{equation}
    	where we set $\mathcal{S}_{0,0} = \mathcal{S}_{d,g} = \{1\}$.
	\end{enumerate}
\end{theorem}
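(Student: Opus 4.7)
The plan is to prove the two directions separately. The direction ``block structure implies $Lc = gc$'' reduces to an induction on the bond index $k$ using the decomposition $L = \opleq{L}{k} \otimes I + I \otimes \opgeq{L}{k+1}$, together with the further split $\opleq{L}{k} = \opleq{L}{k-1} \otimes I_p + I \otimes \operatorname{diag}(0,\ldots,p-1)$. The block condition~(\ref{eq:block_sparsity}) ensures that in the recursion $\rmapleqq{k,\ell_2}{c} = \sum_{\ell_1,m} C_k(\ell_1,m,\ell_2)\, \rmapleqq{k-1,\ell_1}{c} \otimes e_m$ (with $e_m \in \mathbb{R}^p$ the $m$-th standard basis vector), only summands with $\tilde g_1 + (m-1) = \tilde g_2$ survive; since each such summand is an $\opleq{L}{k}$-eigenvector with eigenvalue $\tilde g_2$, so is the full interface tensor. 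The symmetric statement for right interfaces follows analogously, and $Lc = gc$ at $k = d$ is then immediate since $\mathcal{S}_{d,g} = \{1\}$.

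The substantive direction is the converse. Starting from a left-orthogonal minimal-rank representation of $c$, I would process the bonds $k = 1,\ldots,d-1$ in order. The key claim at bond $k$ is that $\opleq{L}{k}$ preserves the span $V_k := \Span\{\rmapleqq{k,\ell}{c} : \ell = 1,\ldots,r_k\}$. To prove this, rewrite $Lc = gc$ using the operator decomposition:
\[
\sum_\ell \bigl(\opleq{L}{k}\, \rmapleqq{k,\ell}{c}\bigr) \otimes \rmapgeqq{k+1,\ell}{c} \;=\; \sum_\ell \rmapleqq{k,\ell}{c} \otimes \bigl(g\, \rmapgeqq{k+1,\ell}{c} - \opgeq{L}{k+1}\, \rmapgeqq{k+1,\ell}{c}\bigr).
\]
Rank minimality forces $\{\rmapgeqq{k+1,\ell}{c}\}$ to be linearly independent, so viewing both sides as linear maps from the right factor into the left and matching their images yields $\Span\{\opleq{L}{k}\rmapleqq{k,\ell}{c}\} \subseteq V_k$. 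Because $\opleq{L}{k}|_{V_k}$ is symmetric with nonnegative integer spectrum, it diagonalizes in an orthonormal basis, and grouping these eigenvectors by eigenvalue $\tilde g$ defines the sets $\mathcal{S}_{k,\tilde g}$. The corresponding orthogonal gauge transformation on $C_k$ and $C_{k+1}$ leaves $c$ and its rank unchanged by Remark~\ref{rmk:TT_representation}. The dual eigenvalue equation in~(\ref{evpi2}) follows because, in the basis $\{\rmapgeqq{k+1,\ell}{c}\}$, the same expansion identifies the matrix of $\opgeq{L}{k+1}$ as $gI - A$, where $A$ is the matrix of $\opleq{L}{k}|_{V_k}$; after simultaneous diagonalization this produces eigenvalue $g - \tilde g$ on block $\mathcal{S}_{k,\tilde g}$. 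The block structure~(\ref{eq:block_sparsity}) then drops out of the same recursion: each summand $C_k(\ell_1,m,\ell_2)\, \rmapleqq{k-1,\ell_1}{c} \otimes e_m$ is an eigenvector of $\opleq{L}{k}$ with eigenvalue $\tilde g_1 + (m-1)$ where $\ell_1 \in \mathcal{S}_{k-1,\tilde g_1}$, and since the sum lies in the single eigenspace $\tilde g_2$, only summands with a common $\tilde g = \tilde g_1$ and $\ell_2 \in \mathcal{S}_{k,\tilde g + (m-1)}$ can contribute.

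I expect the main obstacle to be the span-invariance claim, which depends crucially on rank minimality (to secure linear independence of the right interfaces) and on cleanly separating the two additive pieces of $L$ after orthogonalization. Once that is in place, the orthogonal diagonalization at each bond and the resulting block structure on the components are mostly bookkeeping, driven by the additivity $\tilde g_2 = \tilde g_1 + (m-1)$ across each bond and the disjointness of eigenspaces for distinct eigenvalues.
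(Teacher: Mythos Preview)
The paper does not supply its own proof of Theorem~\ref{thm:generalblockstructure}: the result is quoted verbatim from~\cite[Theorem~1]{bachmayr} and used as a black box, so there is no in-paper argument to compare against. Your sketch is, in fact, essentially the argument given in~\cite{bachmayr}: the easy direction by induction along the chain using $\opleq{L}{k} = \opleq{L}{k-1}\otimes I_p + I\otimes\operatorname{diag}(0,\ldots,p-1)$, and the substantive direction by (i) establishing invariance of the left interface span $V_k$ under $\opleq{L}{k}$ via the splitting $L = \opleq{L}{k}\otimes I + I\otimes\opgeq{L}{k+1}$ together with linear independence of the right interfaces from rank minimality, (ii) orthogonally diagonalizing the symmetric restriction $\opleq{L}{k}|_{V_k}$ to produce the sets $\mathcal{S}_{k,\tilde g}$, and (iii) reading off the block pattern~(\ref{eq:block_sparsity}) from the eigenvalue additivity $\tilde g_2 = \tilde g_1 + (m-1)$.

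One small point worth tightening: your derivation of the dual eigenrelation for $\opgeq{L}{k+1}$ is slightly informal. The clean way is to exploit left-orthonormality of the $\rmapleqq{k,\ell}{c}$ and project the identity $Lc=gc$ onto $\rmapleqq{k,j}{c}$ in the left factor, which yields directly $\opgeq{L}{k+1}\rmapgeqq{k+1,j}{c} = g\,\rmapgeqq{k+1,j}{c} - \sum_\ell A_{j\ell}\,\rmapgeqq{k+1,\ell}{c}$ with $A_{j\ell} = \langle \rmapleqq{k,j}{c},\, \opleq{L}{k}\rmapleqq{k,\ell}{c}\rangle$; this simultaneously proves right-span invariance and identifies the right matrix as $gI - A^\intercal$, so the same orthogonal gauge diagonalizes both sides. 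With that clarification your plan is complete and correct.
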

Note that this generalizes to other dictionaries and is not restricted to monomials.

\begin{remark}
    The rank bounds presented in this section do not only hold for the monomial dictionary $\Psi_{\text{monomial}}$ but for all polynomial dictionaries $\Psi$ that satisfy $\operatorname{deg}(\Psi_k) = k-1$ for all $k=1,\ldots,p$.
    When we speak of homogeneous polynomials of degree $g$ in the following we mean the space $W_g^d = \{v\in V_p^d : \operatorname{deg}(v) = g\}$.
    For the dictionary of monomials $\Psi_{\text{monomial}}$ the space ${W}_g^d$ contains only homogeneous polynomials in the classical sense.
    However, when the basis of Legendre polynomials $\Psi_{\text{Legendre}}$
	is used one obtains a space in which the functions are not homogeneous in the this sense.
	Note that we use polynomials since they have been applied successfully in practice, but other function dictionaries can be used as well.
	Also note that the theory is much more general as shown in~\cite{bachmayr} and is not restricted to the degree counting operator.
\end{remark}
Although, block sparsity also appears for $g+1\neq p$ we restrict ourselves to the case $g+1=p$ in this work.
Note that then the eigenspace of $L$ to the eigenvalue $g$ have the dimension equal to the space of homogeneous polynomials namely $\binom{d+g-1}{d-1}$ and for $\rho_{k,\tilde g} = |\mathcal{S}_{k,\tilde g}|$ we get the following rank bounds.
\begin{theorem}\label{thm:rankbound}\cite[Lemma 7]{bachmayr}
    Let $\boldsymbol{p} = (p,\ldots,p)$ be a dimension tuple of size $d$ and $c \in \mathbb{R}^{\boldsymbol{p}}\setminus\{0\}$, with $L c = g c$.
    Assume that $g + 1= p$ then the block sizes $\rho_{k,\tilde{g}}$ from Theorem~\ref{thm:generalblockstructure} are bounded by
    \begin{equation}\label{eq:rank_bounds}
        \rho_{k,\tilde g} \leq  \min \left\{\binom{k+\tilde g-1}{k-1},\binom{d-k+g-\tilde g-1}{d-k-1}\right\}
    \end{equation}
    for all $k=1,\ldots,d-1$ and $\tilde{g}=0,\ldots,g$ and $\rho_{k,0}=\rho_{k,g}=1$.
\end{theorem}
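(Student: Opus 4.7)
The plan is to derive each of the two binomial bounds separately by interpreting the block sizes as dimensions of spaces of homogeneous polynomials on a subset of the variables, and then intersect the two bounds.

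First I would fix a $k\in\{1,\ldots,d-1\}$ and a degree $\tilde g\in\{0,\ldots,g\}$, and pass to a minimal block-sparse representation of $c$ (so that the left unfolding at mode $k$ has full column rank $r_k$). By Theorem~\ref{thm:generalblockstructure}, the left interface vectors $\{\rmapleqq{k,\ell}{c}\}_{\ell\in\mathcal{S}_{k,\tilde g}}$ are eigenvectors of $\opleq{L}{k}$ with eigenvalue $\tilde g$. Under the correspondence from Remark~\ref{rmk:tts_are_polynomials} (using that the dictionary satisfies $\operatorname{deg}(\Psi_m)=m-1$), each $\rmapleqq{k,\ell}{c}$ is the coefficient tensor of a homogeneous polynomial of degree $\tilde g$ in the $k$ variables $x_1,\ldots,x_k$, i.e.\ an element of $W_{\tilde g}^k$. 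Minimality of the representation (equivalently, left-orthogonality together with $r_k$ being the rank of the $k$-th unfolding, cf.\ \cite[Theorem~1a]{holtz_manifolds_2012}) guarantees that these interface vectors are linearly independent, so the subspace they span inside $W_{\tilde g}^k$ has dimension exactly $\rho_{k,\tilde g}$.

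Next I would compute $\dim W_{\tilde g}^k$. Since $g+1=p$ and $\tilde g\le g$, the per-variable degree constraint imposed by the dictionary ($m_i\le p-1=g$) is automatically satisfied for any $(m_1,\ldots,m_k)$ with $\sum m_i=\tilde g$, so $W_{\tilde g}^k$ coincides with the full space of homogeneous polynomials of degree $\tilde g$ in $k$ variables. Its dimension is the classical stars-and-bars count $\binom{k+\tilde g-1}{k-1}$. Hence
\[
    \rho_{k,\tilde g}\;\le\;\dim W_{\tilde g}^k\;=\;\binom{k+\tilde g-1}{k-1}.
\]

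The second bound is obtained by the completely symmetric argument applied to the right interface vectors: by Theorem~\ref{thm:generalblockstructure} the vectors $\{\rmapgeqq{k+1,\ell}{c}\}_{\ell\in\mathcal{S}_{k,\tilde g}}$ are coefficient tensors of homogeneous polynomials of degree $g-\tilde g$ in the $d-k$ variables $x_{k+1},\ldots,x_d$, are linearly independent by minimality, and therefore $\rho_{k,\tilde g}\le \dim W_{g-\tilde g}^{d-k}=\binom{d-k+g-\tilde g-1}{d-k-1}$. Taking the minimum of the two bounds yields~\eqref{eq:rank_bounds}. The boundary cases $\rho_{k,0}=\rho_{k,g}=1$ follow because $W_0^k$ (constants) and $W_g^{d-k}$ at the extreme end each contribute a single one-dimensional degree block forced by $\mathcal{S}_{0,0}=\mathcal{S}_{d,g}=\{1\}$ together with the homogeneity constraint on $c$.

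The only subtle point — and what I would consider the main obstacle — is justifying the linear independence of the interface vectors within each degree block, i.e.\ that in a minimal block-sparse representation the selection $\mathcal{S}_{k,\tilde g}$ really parametrizes $\rho_{k,\tilde g}$ independent polynomials in $W_{\tilde g}^k$. This requires noting that one may simultaneously left-orthogonalize $C_1,\ldots,C_k$ within each degree block (block-sparsity is preserved under such QR-type transformations, as recorded in~\cite{bachmayr}) so that the left unfolding restricted to the index set $\mathcal{S}_{k,\tilde g}$ has orthonormal columns, whose corresponding polynomials are therefore linearly independent. Everything else reduces to the elementary dimension formula for homogeneous polynomials.
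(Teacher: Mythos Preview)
Your argument is correct. The paper, however, sketches a different (though closely related) route: instead of bounding $\rho_{k,\tilde g}$ directly by $\dim W_{\tilde g}^k$, it uses the block-sparsity constraint~\eqref{eq:block_sparsity} to set up the recursion $\rho_{k,\tilde g}\le\sum_{\bar g\le\tilde g}\rho_{k-1,\bar g}$ (and its mirror image from the right), and then solves this recursion from $\rho_{0,0}=1$ to obtain the binomial coefficients. Your approach is more conceptual --- you identify the ambient space $W_{\tilde g}^k$ in which the left interface vectors live and read off its dimension by stars-and-bars --- whereas the paper's recursion implicitly rebuilds that same dimension via the Pascal-type identity $\binom{k+\tilde g-1}{k-1}=\sum_{\bar g=0}^{\tilde g}\binom{(k-1)+\bar g-1}{(k-1)-1}$. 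Your version makes the role of the hypothesis $g+1=p$ completely transparent (it is precisely what renders the per-variable degree cap inactive so that $W_{\tilde g}^k$ is the \emph{full} space of degree-$\tilde g$ homogeneous polynomials in $k$ variables); the recursive version stays closer to the structural description of the component tensors and does not need to name the ambient space at all. Both rely on minimality of the representation in the same way, so neither is really cheaper on that point.
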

The proof of this theorem is based on a simple combinatorial argument.
For every $k$ consider the size of the groups $\rho_{k-1,\bar{g}}$ for $\bar{g}\le\tilde{g}$.
Then $\rho_{k,\bar{g}}$ can not exceed the sum of these sizes.
Similarly, $\rho_{k,\bar{g}}$ can not exceed $\sum_{\bar{g}\le\tilde{g}} \rho_{k+1,\bar{g}}$.
Solving these recurrence relations yields the bound.
\begin{example}[\textit{Block Sparsity}]
    Let $p=4$ and $g=3$ be given and let $c$ be a tensor train such that $Lc = gc$.
    Then for $k=2,\ldots,d-1$ the component tensors $C_k$ of $c$ exhibit the following block sparsity (up to permutation).
    For indices $i$ of order $r_{k-1}$ and $j$ of order $r_k$
    \[
    C_k(i,1,j) = \begin{pmatrix}* &0&0&0\\0&*&0&0\\0&0&*&0\\0&0&0&* \end{pmatrix}\, C_k(i,2,j) = \begin{pmatrix}0 &*&0&0\\0&0&*&0\\0&0&0&*\\0&0&0&0 \end{pmatrix}\,
    C_k(i,3,j) = \begin{pmatrix}0 &0&*&0\\0&0&0&*\\0&0&0&0\\0&0&0&0 \end{pmatrix}\,
    C_k(i,4,j) = \begin{pmatrix}0 &0&0&*\\0&0&0&0\\0&0&0&0\\0&0&0&0 \end{pmatrix}.
    \]
    This block structure results from sorting the indices $i$ and $j$ in such a way that $\max\mathcal{S}_{k,\tilde g}+1 = \min\mathcal{S}_{k,\tilde g+1}$ for every $\tilde g$.
    The maximal block sizes $\rho_{k,\tilde g}$ for $k = 1,\ldots, d-1$ are given by 
    \[
    \rho_{k, 0} = 1,
    \quad
    \rho_{k, 1} = \min\{k,d-k\},
    \quad
    \rho_{k, 2} = \min\{k,d-k\},
    \quad
    \rho_{k, 3} = 1. 
    \]
\end{example}
As one can see by Theorem~\ref{thm:rankbound} the block sizes $\rho_{k,\tilde g}$ can still be quite high.
The expressive power of tensor train parametrization can be understood by different concepts such as for example locality or self similarity.
For what comes now, we state a result that addresses locality and leads to $d$-independent rank bounds.
For this we need to introduce a workable notion of locality.
\begin{definition}
    Let $u\in W_g^d$ be a homogeneous polynomial and $B$ be the symmetric coefficient tensor %
    introduced in Subsection~\ref{subsec:sets}.
    We say that $u$ has a variable locality of $K_{\mathrm{loc}}$ if $B(\ell_1,\ldots,\ell_g)=0$ for all $(\ell_1,\ldots,\ell_g)\in\mathbb{N}_d^g$ with
    \[
        \max\{|\ell_{m_1}-\ell_{m_2}|\,:\,m_1,m_2=1,\ldots,g\} > K_{\mathrm{loc}} .
    \]
\end{definition}
\begin{example}
    Let $u$ be a homogeneous polynomial of degree $2$ with variable locality $K_{\mathrm{loc}}$.
    Then the symmetric matrix $B$ (cf.~\eqref{eq:symmetric_polynomial}) is $K_{\mathrm{loc}}$-banded.
    For $K_{\mathrm{loc}}=0$ this means that $B$ is diagonal and that $u$ takes the form
    \begin{equation*}
        u(x) = \sum_{\ell=1}^d B_{\ell\ell} x_\ell^2 .
    \end{equation*}
    This shows that variable locality removes mixed terms.
\end{example}
\begin{theorem}\label{thm:rankboundlocal}
    Let $\boldsymbol{p} = (p,\ldots,p)$ be a dimension tuple of size $d$ and $c \in \mathbb{R}^{\boldsymbol{p}}\setminus\{0\}$ correspond to a homogeneous polynomial of degree $g+1=p$ (i.e.\ $Lc = gc$) with variable locality $K_{\mathrm{loc}}$.
    Then the block sizes $\rho_{k,\tilde{g}}$ %
    are bounded by
    \begin{align}\label{eq:localrankbound}
        \rho_{k,\tilde g} \leq \sum_{\ell =1}^{K_\mathrm{loc}}\min\left\{\binom{K_\mathrm{loc}-\ell+1+\tilde g -2}{K_\mathrm{loc}-\ell},\binom{\ell+g-\tilde g -2}{\ell-1}\right\}
    \end{align}
    for all $k=1,\ldots,d-1$ and $\tilde{g} = 1,\ldots,g-1$
    as well as $\rho_{k,0}=\rho_{k,g}=1$.
\end{theorem}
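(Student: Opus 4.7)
The plan is to adapt the combinatorial counting argument that underlies Theorem~\ref{thm:rankbound} so as to exploit the variable-locality hypothesis. By the symmetric-tensor parameterization of $W_g^d$ and by Theorem~\ref{thm:generalblockstructure}, the block size $\rho_{k,\tilde g}$ equals the rank of the matrix $M$ whose rows are indexed by degree-$\tilde g$ multisets $L\subseteq\{1,\dots,k\}$, whose columns are indexed by degree-$(g-\tilde g)$ multisets $R\subseteq\{k+1,\dots,d\}$, and whose entry $M(L,R)$ is proportional to $B(L\sqcup R)$. Variable locality forces $B(\mu)=0$ whenever $\operatorname{diam}(\mu)>K_\mathrm{loc}$, so every nonzero entry of $M$ requires the combined support $\supp(L)\cup\supp(R)$ to lie in a window of at most $K_\mathrm{loc}+1$ consecutive positions straddling the cut.

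I would then partition the columns of $M$ by the offset $\ell:=\max(\supp(R))-k$, which is forced into $\{1,\dots,K_\mathrm{loc}\}$ by locality. The $\ell$-th column block consists exactly of the degree-$(g-\tilde g)$ monomials on $\{x_{k+1},\dots,x_{k+\ell}\}$ with $x_{k+\ell}$ of positive degree, which a direct count shows to be $\binom{g-\tilde g+\ell-2}{\ell-1}$ in number, matching the second binomial in~\eqref{eq:localrankbound}. Dually, partitioning rows by $\min(\supp(L))$ groups the rows compatible with the $\ell$-th column block into degree-$\tilde g$ monomials on the length-$(K_\mathrm{loc}-\ell+1)$ left window $\{k-K_\mathrm{loc}+\ell,\dots,k\}$; a parallel count produces the first binomial $\binom{K_\mathrm{loc}-\ell+\tilde g-1}{K_\mathrm{loc}-\ell}$. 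Bounding the rank of each block by the minimum of these two counts and summing over $\ell$ yields the claimed inequality, while the boundary cases $\tilde g\in\{0,g\}$ with $\rho_{k,0}=\rho_{k,g}=1$ follow directly from the definition of $\mathcal S_{k,\cdot}$.

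The main obstacle will be justifying the refined row count $\binom{K_\mathrm{loc}-\ell+\tilde g-1}{K_\mathrm{loc}-\ell}$ rather than the naive window count $\binom{K_\mathrm{loc}-\ell+\tilde g}{K_\mathrm{loc}-\ell}$ that one obtains by simply enumerating all degree-$\tilde g$ monomials in the $K_\mathrm{loc}-\ell+1$ available left positions. The $-1$ in the top entry corresponds to forcing one end of the left window to be active so as to avoid double-counting rows that also appear in neighboring $\ell$-blocks; the cleanest route is to exploit the symmetry $(\ell,\tilde g)\leftrightarrow(K_\mathrm{loc}-\ell+1,g-\tilde g)$ that the claimed bound enjoys, so that the dual min-partition of rows mirrors the max-partition of columns and each basis row is charged to exactly one block. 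As a sanity check, the hockey-stick identity gives $\sum_{\ell=1}^{K_\mathrm{loc}}\binom{K_\mathrm{loc}-\ell+\tilde g-1}{K_\mathrm{loc}-\ell}=\binom{K_\mathrm{loc}+\tilde g-1}{K_\mathrm{loc}-1}=\dim W_{\tilde g}^{K_\mathrm{loc}}$, which is itself an upper bound on $\rho_{k,\tilde g}$ since the left interface polynomials live in $W_{\tilde g}^{K_\mathrm{loc}}$ by locality.
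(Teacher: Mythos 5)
Your plan is, in substance, the paper's own proof: the paper also reduces $\rho_{k,\tilde g}$ to the rank of the degree-$(\tilde g,g-\tilde g)$ block of the unfolding at the cut, orders the left factors by their smallest active variable and the right factors by their largest active variable, and obtains a staircase (block lower-triangular) matrix with diagonal blocks $Q_1,\ldots,Q_{K_{\mathrm{loc}}}$ whose row and column counts are exactly your two binomials. The ``obstacle'' you flag is not one: partitioning the rows by the \emph{exact} value of $\min(\supp(L))$ puts into the $\ell$-th row block precisely the degree-$\tilde g$ polynomials of the form $x_{k-K_{\mathrm{loc}}+\ell}\,\tilde u$ with $\tilde u$ an arbitrary polynomial of degree $\tilde g-1$ in the $K_{\mathrm{loc}}-\ell+1$ window variables, which yields $\binom{K_{\mathrm{loc}}-\ell+\tilde g-1}{K_{\mathrm{loc}}-\ell}$ directly; no appeal to the $(\ell,\tilde g)\leftrightarrow(K_{\mathrm{loc}}-\ell+1,g-\tilde g)$ symmetry is needed.

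The step you must not leave as stated is ``bounding the rank of each block by the minimum of these two counts and summing over $\ell$.'' For a general block lower-triangular matrix the rank is \emph{not} bounded by the sum over the diagonal blocks of $\min\{\#\mathrm{rows},\#\mathrm{cols}\}$: the identity
\begin{equation*}
\begin{pmatrix} 1 & 0 & 0\\ 0 & 1 & 0\\ 0 & 0 & 1\end{pmatrix}
\end{equation*}
with row blocks $\{1\},\{2,3\}$ and column blocks $\{1,2\},\{3\}$ is block lower-triangular, its diagonal blocks are $1\times 2$ and $2\times 1$, yet its rank is $3>1+1$. What rescues the argument in the present situation is a monotonicity you use only implicitly: the row-block sizes $\binom{K_{\mathrm{loc}}-\ell+\tilde g-1}{K_{\mathrm{loc}}-\ell}$ decrease and the column-block sizes $\binom{\ell+g-\tilde g-2}{\ell-1}$ increase in $\ell$, so there is a threshold $\ell^*$ with $Q_1,\dots,Q_{\ell^*}$ tall and $Q_{\ell^*+1},\dots,Q_{K_{\mathrm{loc}}}$ wide. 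Grouping together the columns of the tall blocks and the rows of the wide blocks, the staircase zero pattern makes the complementary block vanish, whence
\begin{equation*}
\operatorname{rank}(Q)\le\sum_{\ell\le\ell^*}\#\mathrm{cols}(Q_\ell)+\sum_{\ell>\ell^*}\#\mathrm{rows}(Q_\ell)=\sum_{\ell=1}^{K_{\mathrm{loc}}}\min\bigl\{\#\mathrm{rows}(Q_\ell),\#\mathrm{cols}(Q_\ell)\bigr\},
\end{equation*}
which is exactly the splitting into $Q_{\mathrm{C}}$ and $Q_{\mathrm{R}}$ in the paper. With this observation added, your argument is complete and coincides with the published proof.
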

\begin{proof}
    For fixed $g>0$ and a fixed component $C_k$ recall that for each $l$ the tensor $\tau_{k,l}^\le(c)$ corresponds to a reduced basis function $v_l$ in the variables $x_1,\ldots,x_k$ and that for each $l$ the tensor $\tau_{k+1,l}^\ge(c)$ corresponds to a reduced basis function $w_{l}$ in the variables $x_{k+1},\ldots,x_d$.
    Further recall that the sets $\mathcal{S}_{k,\tilde{g}}$ group these $v_l$ and $w_l$. 
    For all $l\in\mathcal{S}_{k,\tilde{g}}$ it holds that $\operatorname{deg}(v_l) = \tilde{g}$ and $\operatorname{deg}(w_l) = g-\tilde{g}$.
    For $\tilde g = 0$ and $\tilde g = g$ we know from Theorem~\ref{thm:rankbound} that $\rho_{k,\tilde{g}} = 1$.
    Now fix any $0 < \tilde{g} < g$ and arrange all the polynomials $v_l$ of degree $\tilde{g}$ in a vector $v$ and all polynomials $w_l$ of degree $g-\tilde{g}$ in a vector $w$.
    Then every polynomial of the form $v^\intercal Q w$ for some matrix $Q$ satisfies the degree constraint and the maximal possible rank of $Q$ provides an upper bound for the block size $\rho_{k,\tilde g}$. 
    However, due to the locality constraint we know that certain entries of $Q$ have to be zero.
    We denote a variable of a polynomial as inactive if the polynomial is constant with respect to changes in this variable and active otherwise.
    Assume that the polynomials in $v$ are ordered (ascendingly) according to the smallest index of their active variables and that the polynomials in $w$ are ordered (ascendingly) according to the largest index of their active variables.
    With this ordering $Q$ takes the form
    \begingroup
    \newcommand\bigzero{\makebox(0,0){\text{\huge0}}}
    \newcommand{\bord}[1]{\multicolumn{1}{c|}{#1}}
    \begin{equation*}
        Q = \begin{pmatrix} \vphantom{0} &            &           &        &                    & & \\
                            \vphantom{0} &            &           &        & \bigzero           & & \\ \cline{1-1}
                            \bord{Q_1}   &            &           &        &                    & & \\ \cline{2-2}
                            *            & \bord{Q_2} &           &        &                    & & \\ \cline{3-3}
                            *            & *          & \bord{Q_3}&        &                    & & \\
                            \vdots       & \vdots     & \vdots    & \ddots &                    & & \\ \cline{5-5}
                            *            & *          & *         & \cdots & \bord{Q_{K_\mathrm{loc}}} & \hphantom{0} & \hphantom{0}
            \end{pmatrix} .
    \end{equation*}
    \endgroup
    This means that for $\ell=1,\ldots,K_{\mathrm{loc}}$ each block $Q_\ell$ matches a polynomial $v_l$ of degree $\tilde{g}$ in the variables $x_{k-K_{\mathrm{loc}}+\ell},\ldots,x_k$ with a polynomial $w_l$ of degree $g-\tilde{g}$ in the variables $x_{k+1},\ldots,x_{k+\ell}$.
    Observe that the number of rows in $Q_\ell$ decreases while the number columns increases with $\ell$.
    This means that we can subdivide $Q$ as
    \begin{equation*}
        Q = \begin{pmatrix} 0   & 0   & 0 \\
                            Q_{\mathrm{C}} & 0   & 0 \\ 
                            *   & Q_{\mathrm{R}} & 0 
            \end{pmatrix},
    \end{equation*}
    where $Q_{\mathrm{C}}$ contains the blocks $Q_\ell$ with more rows than columns (i.e.\ full column rank) and $Q_{\mathrm{R}}$ contains the blocks $Q_\ell$ with more columns than rows (i.e.\ full row rank).
    So $Q_{\mathrm{C}}$ is a tall-and-skinny matrix while $Q_{\mathrm{R}}$ is a short-and-wide matrix and the rank for general $Q$ is bounded by the sum over the column sizes of the $Q_\ell$ in $Q_{\mathrm{C}}$ plus the sum over the row sizes of the $Q_\ell$ in $Q_{\mathrm{R}}$ i.e.
    \begin{equation*}
        \operatorname{rank}(Q) = \sum_{\ell=1}^{K_{\mathrm{loc}}} \operatorname{rank}(Q_\ell) .
    \end{equation*}
    To conclude the proof it remains to compute the row and column sizes of $Q_\ell$.
    Recall that the number of rows of $Q_\ell$ equals the number of polynomials $u$ of degree $\tilde{g}$ in the variables $x_{k-K_{\mathrm{loc}}+\ell},\ldots,x_{k}$ that can be represented as $u(x_{k-K_{\mathrm{loc}}+\ell},\ldots,x_{k}) = x_{k-K_{\mathrm{loc}}+\ell}\tilde{u}(x_{k-K_{\mathrm{loc}}+\ell},\ldots,x_{k})$.
    This corresponds to all possible $\tilde{u}$ of degree $\tilde{g}-1$ in the $K_{\mathrm{loc}}-\ell+1$ variables $x_{k-K_{\mathrm{loc}}+\ell},\ldots,x_{k}$.
    This means that
    \begin{equation*}
        \operatorname{\#rows}(Q_\ell) \le \binom{K_{\mathrm{loc}}-\ell+1+\tilde{g}-2}{K_{\mathrm{loc}}-\ell}
    \end{equation*}
    and a similar argument yields
    \begin{equation*}
        \operatorname{\#columns}(Q_\ell) \le \binom{\ell+g-\tilde{g}-2}{\ell-1} .
    \end{equation*}
    This concludes the proof.
\end{proof}
This lemma demonstrates how the combination of the model space $W_g^d$ with a tensor network space can improve the space complexity by incorporating locality.
\begin{remark}
    The rank bound in Theorem~\ref{thm:rankboundlocal} is only sharp for the highest possible rank.
    At the sides of the tensor trains the ranks can be much lower, but the precise bounds are quite technical to write down, which is why we skipped this.
    One sees that the bound only depends on $g$ and $K_\mathrm{loc}$ and is therefore $d$-independent.
\end{remark}
With Theorem~\ref{thm:rankboundlocal} it is possible to formulate situations in which a block sparse tensor train representation perform exceptionally well.
Let $u$ be a homogeneous polynomial with symmetric coefficient tensor $B$ (cf.~\eqref{eq:symmetric_polynomial}) and let $B|_{K_{\mathrm{loc}}}$ be the restriction of $B$ onto the coefficients that satisfy the variable locality constraint $K_{\mathrm{loc}}$. %
If we can choose $K_\mathrm{loc}$ such that the error of this restriction is small $u$ can be well approximated by a block sparse tensor train satisfying the rank bounds~\eqref{eq:localrankbound}.

\section{Method Description}\label{sec:method}
In this section we utilize the insights of Section~\ref{sec:knownresults} to refine the approximation spaces $W_g^d$ and $S_g^d$ and adapt the \emph{alternating least-squares (ALS)} method to solve the related least-squares problems.
First, we define the subset %
\begin{equation}
    B_\rho(W_g^d) := \{ u\in W_g^d\,:\, c \text{ is block-sparse with }\rho_{k,\tilde{g}}\le\rho \text{ for } 0\leq \tilde g \leq g\}
\end{equation}
and provide an algorithm for the related least-squares problem in Algorithm~\ref{alg:extended_ALS} which is a slightly modified version of the classical ALS~\cite{holtz_alternating_2012}\footnote{It is possible to include rank adaptivity as in SALSA~\cite{grasedyck_stable_2019} or bASD~\cite{eigel_non-intrusive_2019} and we have noted this in the relevant places.
}.
With this definition a straight-forward application of the concept of block-sparsity to the space $S_g^d$ is given by
\begin{equation}\label{eq:Sgrhod}
    S_{g,\rho}^d = \bigoplus_{\tilde{g}=0}^g B_\rho(W_{\tilde{g}}^d) .
\end{equation}
This means that every polynomial in $S_{g,\rho}^d$ can be represented by a sum of orthogonal coefficient tensors\footnote{The orthogonality comes from the symmetry of $L$ which results in orthogonal eigenspaces.}
\begin{equation}\label{eq:firstansatz}
    \sum_{\tilde{g}=0}^{g} c^{(\tilde{g})} \quad\text{where}\quad Lc^{(\tilde{g})}=\tilde{g}c^{(\tilde{g})} .
\end{equation}
There is however another, more compact, way to represent this function.
Instead of storing $g+1$ different tensors $c^{(0)},\ldots,c^{(g)}$ of order $d$, we can merge them into a single tensor $c$ of order $d+1$ such that $c(i^d, \tilde{g}) = c^{(\tilde{g})}(i^d)$.
The summation over $\tilde{g}$ can then be represented by a contraction of a vector of $1$'s to the $(d+1)$-th mode.
To retain the block-sparse representation we can view the $(d+1)$-th component as an artificial component representing a shadow variable $x_{d+1}$.
\begin{remark}
    The introduction of the shadow variable $x_{d+1}$ contradicts the locality assumptions of Theorem~\ref{thm:rankboundlocal} and implies that the worst case rank bounds must increase.
    This can be problematic since the block size contributes quadratically to the number of parameters.
    However, a similar argument as in the proof of Theorem~\ref{thm:rankboundlocal} can be made in this setting and one can show that the bounds remain independent of $d$
    \begin{align}\label{eq:localrankboundext}
        \rho_{k,\tilde g} \leq \underline{1+} \sum_{\ell =1}^{K_\mathrm{loc}}\min\left\{\binom{K_\mathrm{loc}-\ell+1+\tilde g -2}{K_\mathrm{loc}-\ell},\binom{\ell\underline{+1}+g-\tilde g -2}{\ell\underline{+1}-1}\right\}
    \end{align}
    where the changes to \eqref{eq:localrankbound} are underlined.
\end{remark}
We denote the set of polynomials that results from this augmented block-sparse tensor train representation as
\begin{equation}
    S_{g,\rho}^{d,\mathrm{aug}}
\end{equation}
where again $\rho$ provides a bound for the block-size in the representation.\\
Since $S_{g,\rho}^{d,\mathrm{aug}}$ is defined analogously to $B_\rho(W_g^d)$ we can use Algorithm~\ref{alg:extended_ALS} to solve the related least-squares problem by changing the contraction~\eqref{tikz:localcontraction} to
\begin{equation}\label{tikz:localcontractionmodified}
\begin{tikzpicture}[baseline=(current  bounding  box.center)]
    \begin{scope}[scale=0.75]
    \draw[black,fill=black] (0,0) circle (0.5ex);	
	\draw[black,fill=black] (0,-2) circle (0.5ex);	
		\draw[black,fill=black] (0,-4) circle (0.5ex);		\draw[black,fill=black] (0,-6) circle (0.5ex);	
	
	\draw[black,fill=black] (-1,0) circle (0.5ex);	
	\draw[black,fill=black] (-1,-2) circle (0.5ex);	
	\draw[black,fill=black] (-1,-3) circle (0.5ex);		\draw[black,fill=black] (-1,-4) circle (0.5ex);		\draw[black,fill=black] (-1,-6) circle (0.5ex);	
	\draw[black,fill=black] (-1,-7) circle (0.5ex);	
	\draw[black,fill=black] (0,-7) circle (0.5ex);	
	\draw (-4,-3)--(-3,-3)--(-1,0)--(0,0)--(0,-0.5);
	\draw (-3,-3)--(-1,-2)--(0,-2);
	\draw (-3,-3)--(-1,-3)--(-0.5,-3);
	\draw (-3,-3)--(-1,-4)--(0,-4);
	\draw (-3,-3)--(-1,-6)--(0,-6)--(0,-5.5);
	\draw (0,-1.5)--(0,-2.5);
	\draw (0,-3.5)--(0,-4.5);
	\draw (-3,-3)--(-1,-7)--(0,-7)--(0,-6);
	
	\node at (-1,-1) {$\vdots$};
	\node at (0,-1) {$\vdots$};
	\node at (-1,-5) {$\vdots$};
	\node at (0,-5) {$\vdots$};
	
	\node[anchor=west] at (0,0) {$C_1$};
	\node[anchor=west] at (0,-2) {$C_{k-1}$};
	\node[anchor=west] at (0,-4) {$C_{k+1}$};
	\node[anchor=west] at (0,-6) {$C_d$};
	\node[anchor=west] at (0,-7) {Id};
	
	\node[anchor=south east] at (-1,0) {$\Xi_1$};
	\node[anchor=south east] at (-1,-2) {$\Xi_{k-1}$};
	\node[anchor=south east] at (-1,-3) {$\Xi_k$};
	\node[anchor=north east] at (-1,-4) {$\Xi_{k+1}$};
	\node[anchor=north east] at (-1,-6) {$\Xi_d$};
	\node[anchor=north] at (-1,-7) {$\mathbf{1}\in\mathbb{R}^{M,g+1}$};
	
	\node at (-4.5,-3) {$=$};
	
	\draw[black,fill=black] (-6,-3) circle (0.5ex);	
	\node[anchor=south east] at (-6,-3) {$\Phi_k$};
	\draw (-6.7,-3)--(-5.3,-3);
	\draw (-5.3,-2.25)--(-6,-3)--(-5.3,-3.75);
	\end{scope}
	\end{tikzpicture}
	.
\end{equation}
To optimize the coefficient tensors $c^{(0)},\ldots,c^{(g)}$ in the space $S_{g,\rho}^d$ we resort to an alternating scheme.
Since the coefficient tensors are mutually orthogonal we propose to optimize each $c^{(\tilde{g})}$ individually while keeping the other summands $\{c^{(k)}\}_{k\ne\tilde{g}}$ fixed.
This means that we solve the problem
\begin{align} \label{eq:L2minlsqrestricted}
    u^{(\tilde{g})} = \argmin_{u\in W_{\tilde{g}}^d} \frac{1}{M} \sum_{m=1}^M \|f(x^{(m)}) - \sum_{\substack{k=0\\k\ne\tilde{g}}}^g u^{(k)}(x^{(m)}) - u(x^{(m)})\|_{\mathrm{F}}^2
\end{align}
which can be solved using Algorithm~\ref{alg:extended_ALS}.
The original problem~\eqref{eq:L2minlsq_empirical} is then solved by alternating over $\tilde{g}$ until a suitable convergence criterion is met.
The complete algorithm is summarized in Algorithm~\ref{alg:alternating_extended_ALS}.\\
The proposed representation has several advantages.
The optimization with the tensor train structure is computationally less demanding than solving directly in $S_{g}^d$.
Let $D = \operatorname{dim}(S_g^d) = \binom{d+g}{d}$.
Then a reconstruction on $S_g^d$ requires to solve a linear system of size $M\times D$ while a microstep in an ALS sweep only requires the solution of systems of size less than $Mpr^2$ (depending on the block size).
Moreover, the stack contractions as shown in~\ref{subsec:leastsq} also benefit from the block sparse structure.
This also means that the number of parameters of a full rank $r$ tensor train can be much higher than the number of parameters of several $c^{(m)}$'s which individually have ranks that are even larger than $r$.
\begin{remark}
    We expect that solving the least-squares problem for $S_{g,\rho}^{d,\mathrm{aug}}$ will be faster than for $S_{g,\rho}^{d}$ since it is computational more efficient to optimize all polynomials simultaneously than every degree individually in an alternating fashion.
    On the other hand, the hierarchical scheme of the summation approach may allow one to utilize multi-level Monte Carlo approaches.
    Together with the fact that every degree $\tilde{g}$ possesses a different optimal sampling density this may result in a drastically improved best case sample efficiency for the direct method.
    Additionally, with $S_{g,\rho}^d$ it is easy to extend the ansatz space simply by increasing $g$ which is not so straight-forward for $S_{g,\rho}^{d,\mathrm{aug}}$.
    Which approach is superior depends on the problem at hand.
\end{remark}

\begin{algorithm}[t!]
\SetKwInOut{Input}{input}\SetKwInOut{Output}{output}
\SetKwInOut{Input}{input} 
\Input{Data pairs $(x^{(m)},y^{(m)})\in\mathbb{R}^d\times \mathbb{R}$ for $m = 1,\ldots,M$, a function dictionary $\Psi$, a maximal degree $g$, and a maximal block size $\rho$.}
\Output{Coefficent tensor $c$ of a function $u\in B(W_g^d)$ that approximates the data.}
\BlankLine
For $k=1,\ldots,d$ compute $\Xi_k$ according to Equation~\eqref{eq:measurement_matrix}\;
Initialize the coefficient tensor $c$ for $u\in B(W_g^d)$\;
\textcolor{gray}{Initialize SALSA parameters\;}
\While{not converged}{
    Right orthogonalize $c$\;
    \For{$k=1,\ldots,d$}{
        Compute $\Phi_k$ according to Equation~\eqref{tikz:localcontraction}\;
        Compute the index set $\mathcal{I}$ of the non-zeros components in $C_k$ according to Equation~\eqref{eq:block_sparsity}\;
        Update $C_k$ by solving \textcolor{gray}{the SALSA-regularized version of} $\Phi_k(j,i^3)\cdot C_k(i^3) = y(j)$ restricted to $i^3\in\mathcal{I}$\;
        Left orthogonalize $C_k$ \textcolor{gray}{and adapt the $k$\textsuperscript{th} rank while respecting block size bounds $\rho$ and~\eqref{eq:rank_bounds}\;}
    }
    \textcolor{gray}{Update SALSA parameters\;}
}
\Return{ $c$ }
\caption{Extended ALS \textcolor{gray}{(SALSA)} for the least-squares problem on $B_\rho(W_g^d)$
}\label{alg:extended_ALS}
\end{algorithm}
\begin{algorithm}[t!]
\SetKwInOut{Input}{input}\SetKwInOut{Output}{output}
\SetKwInOut{Input}{input} 
\Input{Data pairs $(x^{(m)}, y^{(m)})\in\mathbb{R}^d\times \mathbb{R}$ for $m = 1,\ldots,M$, a function dictionary $\Psi$, a maximal degree $g$, and a maximal block size $\rho$.}
\Output{Coefficent tensors $c^{(0)},\ldots,c^{(g)}$ of a function $u\in S_{g,\rho}^d$ that approximates the data.}
\BlankLine
Initialize the coefficient tensors $c^{(\tilde{g})}$ of $u^{(\tilde{g})}\in B_\rho(W_{\tilde{g}}^d)$ for $\tilde{g}=0,\ldots,g$\;
\While{not converged}{
    \For{$\tilde{g} = 0,\ldots,g$}{
        Compute $z^{(m)} := y^{(m)} - \sum_{k\ne\tilde{g}} u^{(k)}(x^{(m)})$ for $m=1,\ldots,M$\;
        Update $c^{(\tilde{g})}$ by using Algorithm~\ref{alg:extended_ALS} on the data pairs $(x^{(m)}, z^{(m)})$ for $m=1,\ldots,M$\;
    }
}
\Return{ $c^{(\tilde{g})}$ for $\tilde{g}=0,\ldots,g$ }
\caption{Alternating extended ALS \textcolor{gray}{(SALSA)} for the least-squares problem on $S_{g,\rho}^d$}\label{alg:alternating_extended_ALS}
\end{algorithm}

\section{Numerical Results}\label{sec:numerics}

In this section we illustrate the numerical viability of the proposed framework on some simple but common problems.
We estimate the relative errors  on  test sets with respect to the sought function $f$.
Our implementation is meant only as a proof of concept and does not lay any emphasis on efficiency.
The termination conditions and the rank selection in particularly are na\"ively implemented and rank adaptivity is missing all together. It is, however, straight forward to apply SALSA as described in Section~\ref{sec:method} for rank adaptivity, which we consider to be state of the art for these kinds of problem. But, for our experiments, we are more interested in the required sample sizes leading to recovery.\\
In the following we  always assume $p=g+1$.
We also restrict the group sizes to be bounded by the parameter $\rho_{\mathrm{max}}$.
For every sample size the error plots show the distribution of the errors between the $0.15$ and $0.85$ quantile.
The code for all experiments has been made publicly available at \url{https://github.com/ptrunschke/block_sparse_tt}.

\subsection{Riccati equation}

In this section we consider the closed-loop linear quadratic optimal control problem
\begin{equation*}
\begin{aligned}
    &\underset{u}{\text{minimize}}
    && \|y\|_{L^2([0,\infty]\times[-1,1])}^2 + \lambda \|u\|_{L^2([0,\infty])}^2 \\
    &\text{subject to}
    && \partial_t y = \partial_x^2 y + u(t)\chi_{[-0.4,0.4]}, \; (t,x)\in [0,\infty]\times [-1,1] \\
    &&& y(0,x) = y_0(x), \; x\in [-1,1] \\
    &&& \partial_x y(t,-1) = \partial_x y(t,1) = 0
\end{aligned}
\end{equation*}
After a spatial discretization of the heat equation with finite differences we obtain a $d$-dimensional system of the form
\begin{equation*}
    \underset{u}{\text{minimize}}\ \int_0^\infty \boldsymbol{y}(t)^\intercal Q \boldsymbol{y}(t) + \lambda u(t)^2 \,\mathrm{d}t \quad\text{subject to}\quad  \dot{\boldsymbol{y}} = A \boldsymbol{y} + B u \quad\text{and}\quad \boldsymbol{y}(0) = \boldsymbol{y}_0 .
\end{equation*}
It is well known~\cite{Curtain_1995} that the value function for this problem takes the form $v(\boldsymbol{y}_0) = \boldsymbol{y}_0^{\intercal}P\boldsymbol{y}_0$ where $P$ can be computed by solving the \emph{algebraic Riccati equation (ARE)}.
It is therefore a homogeneous polynomial of degree $2$. This function is a perfect example of a function that can be well-approximated in the space ${W}_2^d$.
We approximate the value function on the domain $\Omega = [-1,1]^d$ for $d=8$ with the parameters $g=2$ and $\rho_{\mathrm{max}}=4$.

In this experiment we use the dictionary of monomials $\Psi = \Psi_{\mathrm{monomial}}$ (cf. equation~\eqref{eq:monomial}) and compare the ansatz spaces ${W}_2^8$, $B_4(W_2^8)$, $T_6(V_3^8)$ and $V_3^8$.
Since the function $v(x)$ is a general polynomial %
we use Theorem~\ref{thm:rankbound} to calculate the maximal block size $4$.
This guarantees perfect reconstruction since $B_4(W_{2}^8) = {W}_2^8$.
The rank bound $6$ is chosen s.t.\ $B_4(W_{2}^8) \subseteq T_6(V_3^8)$.
The degrees of freedom of all used spaces are listed in Table~\ref{tbl:riccati}.
In Figure~\ref{fig:riccati} we compare the relative error of the respective ansatz spaces.
It can be seen that the block sparse ansatz space recovers almost as well as the sparse approach.
As expected, the dense TT format is less favorable with respect to the sample size.

A clever change of basis, given by the diagonalization of $Q$, can reduce the required block size from $4$ to $1$.
This allows to extend the presented approach to higher dimensional problems.
The advantage over the classical Riccati approach becomes clear when considering 
non-linear versions of the control problem that do not exhibit a Riccati solution.
This is done in~\cite{oster_approximating_2020,dolgov_tensor_2021} using the dense TT-format $T_r(V_p^d)$.

\begin{table}[!ht]
    \centering
    \bgroup
    \def\arraystretch{1.25}  %
    \begin{tabular}{ | l | l | l | l | }
        \hline
        \multicolumn{1}{ | c | }{${W}_2^8$} & \multicolumn{1}{ c | }{$B_4(W_{2}^8)$} & \multicolumn{1}{ c | }{$T_6(V_{3}^8)$}  & \multicolumn{1}{ c | }{$V_3^8$} \\
        \hline
        $36$ & $94$ & $390$ & $6561$ \\
        \hline
    \end{tabular}
    \egroup
    \caption{Degrees of freedom for the full space $W_{g}^d$ of homogeneous polynomials of degree $g=2$, the TT variant $B_{\rho_{\rm max}}(W_g^d)$ with maximal block size $\rho_{\mathrm{max}}=4$, the space $T_r(V_p^d)$ with TT rank bounded by $r=6$, and the full space $V_p^d$ for completeness.}
    \label{tbl:riccati}
\end{table}

\begin{figure}[!ht]
    \centering
    \includegraphics[width=0.75\textwidth]{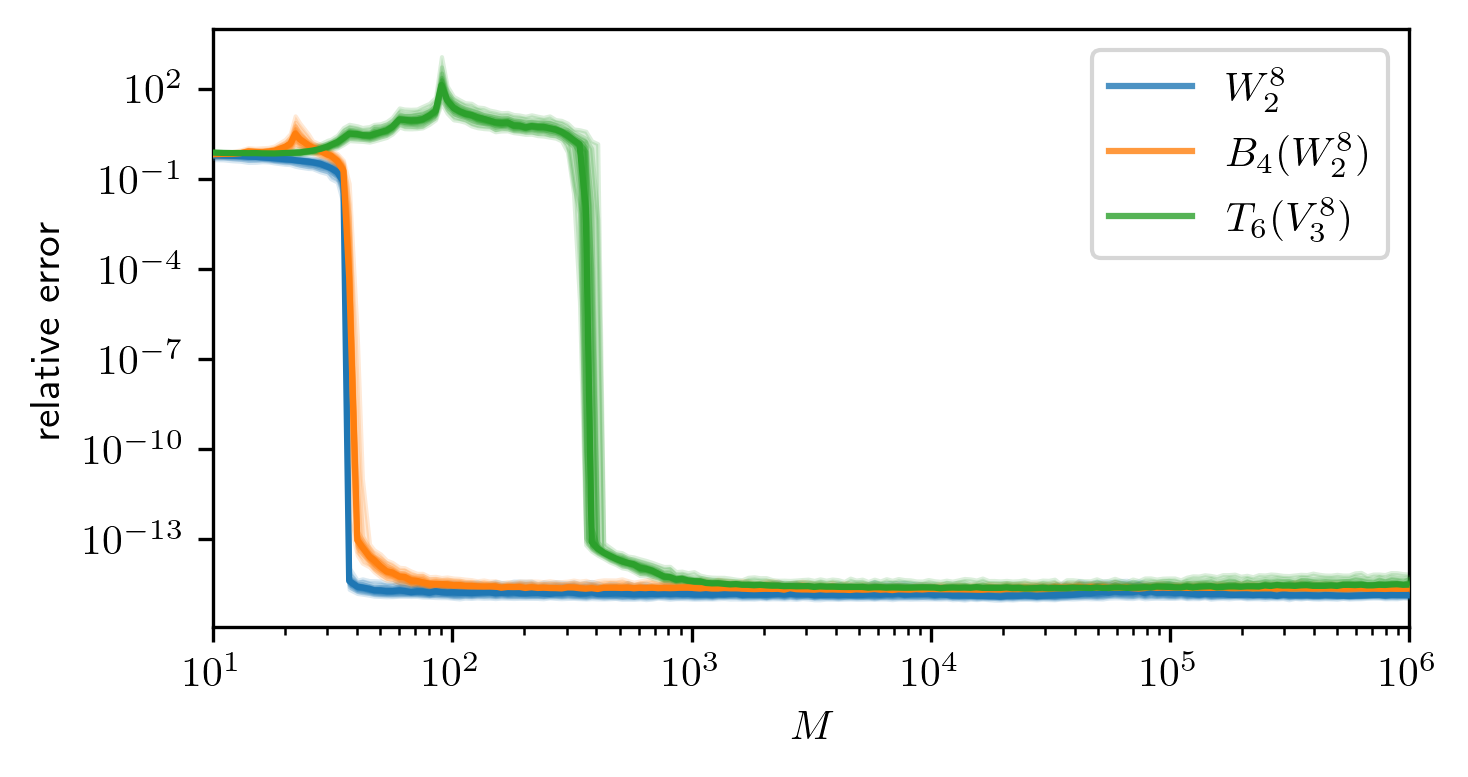}
    \caption{$0.15$--$0.85$ quantiles for recovery in blue: $W_{2}^8$, orange: $B_4( W_{2}^{8})$, and green: $T_6(V_3^8)$.}
    \label{fig:riccati}
\end{figure}

\subsection{Gaussian density}

As a second example we consider the reconstruction of an unnormalized Gaussian density
\begin{equation*}
    f(x) = \exp(-\|x\|_2^2) .
\end{equation*}
again on the domain $\Omega = [-1,1]^d$ with $d=6$.
For the dictionary $\Psi = \Psi_{\mathrm{Legendre}}$ (cf. equation~\eqref{eq:legendre}) we chose $g=7$, $\rho_{\mathrm{max}}=1$ and $r=8$ and compare the reconstruction w.r.t.\ $S_{g}^d$, $S_{g,\rho_{\mathrm{max}}}^d$ and $T_r(V_{p}^d)$, defined in \eqref{eq:degbddpolyspace}, \eqref{eq:Sgrhod} and \eqref{eq:TTspace}.
The degrees of freedom resulting from these different discretizations are compared in Table~\ref{tbl:gaussian}.
This example is interesting because here the roles of the spaces are reversed.
The function has product structure 
\[
f(x) = \exp(-x_1^2)\cdots\exp(-x_d^2)
\]
and can therefore be well approximated as a rank 1 tensor train with each component $C_k$ just being  a best approximation for $\exp(-x_k^2)$ in the used function dictionary. Therefore, we expect the higher degree polynomials to be important. 
A comparison of the relative errors to the exact solution are depicted in Figure~\ref{fig:gaussian}.
This example demonstrates the limitations of the ansatz space $S_7^6$ which is not able to exploit the low-rank structure of the function $f$.
Using $S_{7,1}^6$ can partially remedy this problem as can be seen by the improved sample efficiency.
But since $S_{7,1}^6\subseteq S_7^6$ the final approximation error of $S_{7,1}^6$ can not deceed that of $S_7^6$.
One can see that the dense format $T_1(V_8^6)$ produces the best results but is quite unstable compared to the other ansatz classes.
This instability is a result of the non-convexity of the set $T_r(V_p^d)$ and we observe that the chance of getting stuck in a local minimum increases when the rank $r$ is reduced from $8$ to $1$.
Finally, we want to address the peaks that are observable at $M\approx 500$ samples for $T_{8}(V_8^6)$ and $M\approx 1716$ samples for $S_7^6$.
For this recall that the approximation in $S_7^6$ amounts to solving a linear system which is underdetermined for $M<1716$ samples and overdetermined for $M>1716$ samples.
In the underdetermined case we compute the minimum norm solution and in the overdetermined case we compute the least-squares solution.
It is well-known that the solution to such a reconstruction problem is particularly unstable in the area of this transition~\cite{cohen_optimal_2017}.
Although the set $S_{7,1}^6$ is non-linear we take the peak at $M\approx 500$ as evidence for a similar effect which is produced by the similar linear systems that are solved in the micro steps in the ALS.

\begin{table}[!ht]
    \centering
    \bgroup
    \def\arraystretch{1.25}  %
    \begin{tabular}{ | l | l | l | l | l| }
        \hline
        \multicolumn{1}{ | c | }{$S_{7}^6$} & \multicolumn{1}{ c | }{$S_{7,1}^6$} & \multicolumn{1}{ c | }{$T_1(V_{8}^6)$}  & \multicolumn{1}{ c | }{$T_8(V_{8}^6)$} & \multicolumn{1}{ c | }{$V_8^6$} \\
        \hline
        $1716$ & $552$ & $48$ & $2176$ & $262144$ \\
        \hline
    \end{tabular}
    \egroup
    \caption{Degrees of freedom for the full space $S_{g}^d$, the TT variant  $S_{g,\rho_{\mathrm{max}}}^d$ with maximal block size $\rho_{\mathrm{max}}=1$, the space $T_r(V_p^d)$ with TT rank bounded by $r=1$, the space $T_r(V_p^d)$ with TT rank bounded by $r=8$, and the full space $V_p^d$ for completeness.}
    \label{tbl:gaussian}
\end{table}

\begin{figure}[!ht]
    \centering
    \includegraphics[width=0.75\textwidth]{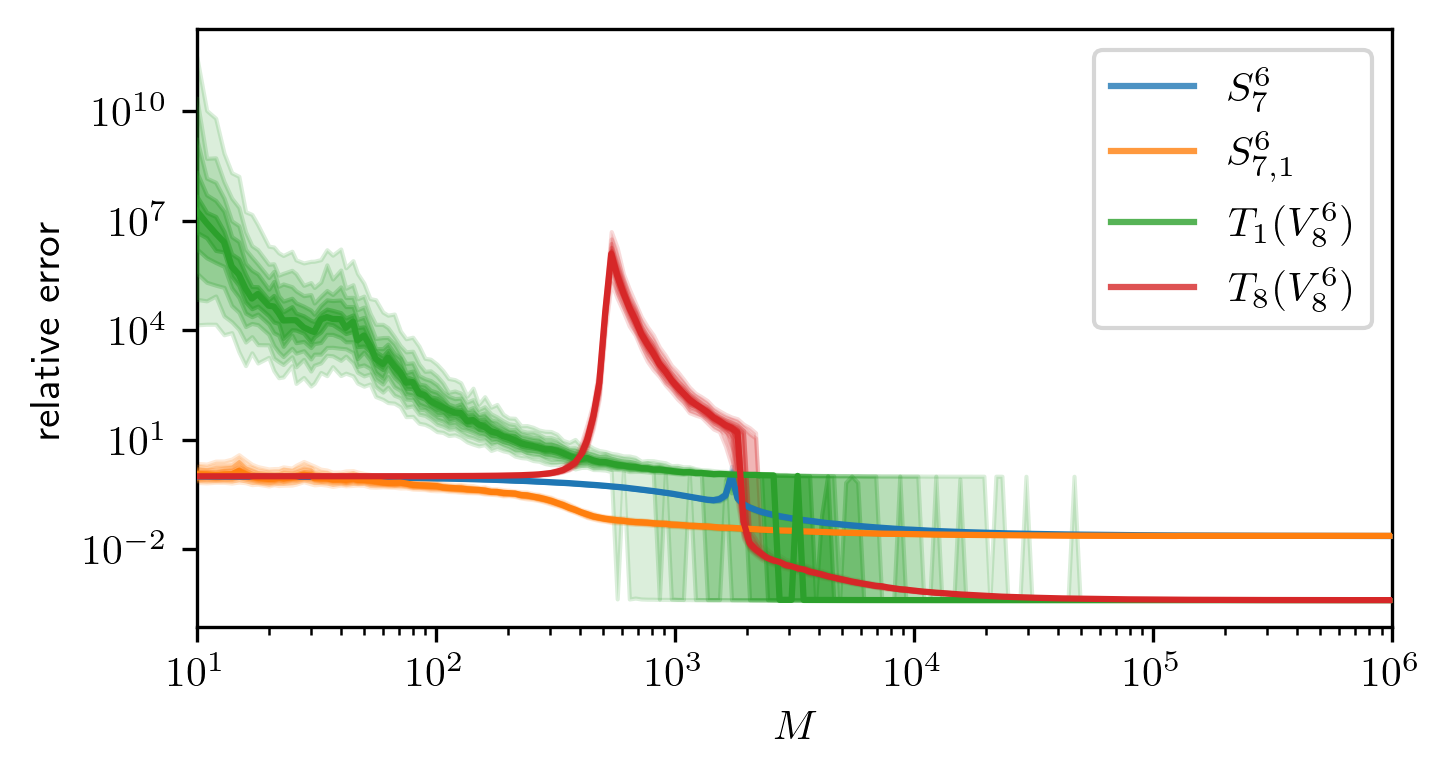}
    \caption{$0.15$--$0.85$ quantiles for recovery in blue: $S_{7}^6$, orange: $S_{7,1}^6$, green: $T_1(V_{8}^6)$, and red: $T_8(V_{8}^6)$.}    \label{fig:gaussian}
\end{figure}

\subsection{Quantities of Interest}

The next considered problem often arises when computing quantities of interest from random partial differential equations.
We consider the stationary diffusion equation
\begin{equation*}
    \begin{aligned}
        \nabla_x a(x,y) \nabla_x u(x,y) &= f(x)  &  x&\in D \\
        u(x,y) &= 0                              &  x&\in\partial D
    \end{aligned}
\end{equation*}
on $D = [-1,1]^2$.
This equation is parametric in $y\in[-1,1]^d$.
The randomness is introduced by the uniformly distributed random variable $y\sim \mathcal{U}([-1,1]^d)$ that enters the diffusion coefficient
\begin{equation*}
    a(x,y) := 1 + \frac{6}{\pi^2} \sum_{k=1}^d k^{-2} \sin(\hat{\varpi}_{k} x_1) \sin(\check{\varpi}_{k} x_2) y_k
\end{equation*}
with $\hat{\varpi}_k = \pi\lfloor\frac{k}{2}\rfloor$ and $\check{\varpi}_k = \pi\lceil\frac{k}{2}\rceil$.
The solution $u$ often measures the concentration of some substance in the domain $\Omega$ and one is  interested in the total amount of this substance in the entire domain
\begin{equation*}
    M(y) := \int_{\Omega} u(x,y) \,\mathrm{d}x .
\end{equation*}
An important result proven in~\cite{Hansen_2012} ensures the $\ell^p$ summability, for some $0<p\le1$, of the polynomial coefficients of the solution of this equation when $\Psi$ is the dictionary of Chebyshev polynomials.
This means that the function is very regular and we presume that it can be well approximated in $S_{g}^d$ for the dictionary of Legendre polynomials $\Psi_{\mathrm{Legendre}}$.
For our numerical experiments we chose $d=10$, $g=5$ and $\rho_{\mathrm{max}}=3$ and again compare the reconstruction w.r.t.\ $S_{g}^d$, the block-sparse TT representations of $S_{g,\rho_{\mathrm{max}}}^d$ and $S_{g,\rho_{\mathrm{max}}}^{d,\mathrm{aug}}$ and a dense TT representation of $T_r(V_{p}^d)$ with rank $r \le 14$.
Admittedly,  the choice $d=10$ is relatively small for this problem but was necessary since the computation on $S_{g}^d$ took prohibitively long for larger values.
A comparison of the degrees of freedom for the different ansatz spaces is given in Table~\ref{tbl:darcy} the relative errors to the exact solution are depicted in Figure~\ref{fig:darcy}.
In this plot we can recognize the general pattern that a lower number of parameters can be associated with an improved sample efficiency.
However, we also observe that for small $M$ the relative error for $S_{g,\rho}^{d}$ is smaller than for $S_{g,\rho}^{d,\mathrm{aug}}$.
We interpret this as a consequence of the regularity of $u$ since the alternating scheme for the optimization in $S_{g,\rho}^{d}$ favors lower degree polynomials by construction.
In spite of this success, we have to point out that optimizing over $S_{g,\rho}^{d}$ took about $10$ times longer than optimizing over $S_{g,\rho}^{d,\mathrm{aug}}$.
Finally, we observe that the recovery in $T_{14}(V_6^{10})$ produces unexpectedly large relative errors when compared to previous results in~\cite{eigel_non-intrusive_2019}.
This implies that the rank-adaptive algorithm from~\cite{eigel_non-intrusive_2019} must have a strong regularizing effect that improves the sample efficiency.

\begin{table}[!ht]
    \centering
    \bgroup
    \def\arraystretch{1.25}  %
    \begin{tabular}{ | l | l | l | l | l | }
        \hline
        \multicolumn{1}{ | c | }{$S_{5}^{10}$} & \multicolumn{1}{ c | }{$S_{5,3}^{10}$} & $S_{5,3}^{10,\mathrm{aug}}$  & \multicolumn{1}{ c | }{$T_{14}(V_{6}^{10})$}  & \multicolumn{1}{ c | }{$V_6^{10}$} \\
        \hline
        $3003$ & $1726$ & $803$ & $7896$ & $60466176$ \\
        \hline
    \end{tabular}
    \egroup
    \caption{Degrees of freedom for the full space $S_{g}^d$, the TT variant  $S_{g,\rho_{\mathrm{max}}}^d$ with maximal block size $\rho_{\mathrm{max}}=3$, the space $T_r(V_p^d)$ with TT rank bounded by $r=14$, and the full space $V_p^d$ for completeness.}
    \label{tbl:darcy}
\end{table}

\begin{figure}[!ht]
    \centering
    \includegraphics[width=0.75\textwidth]{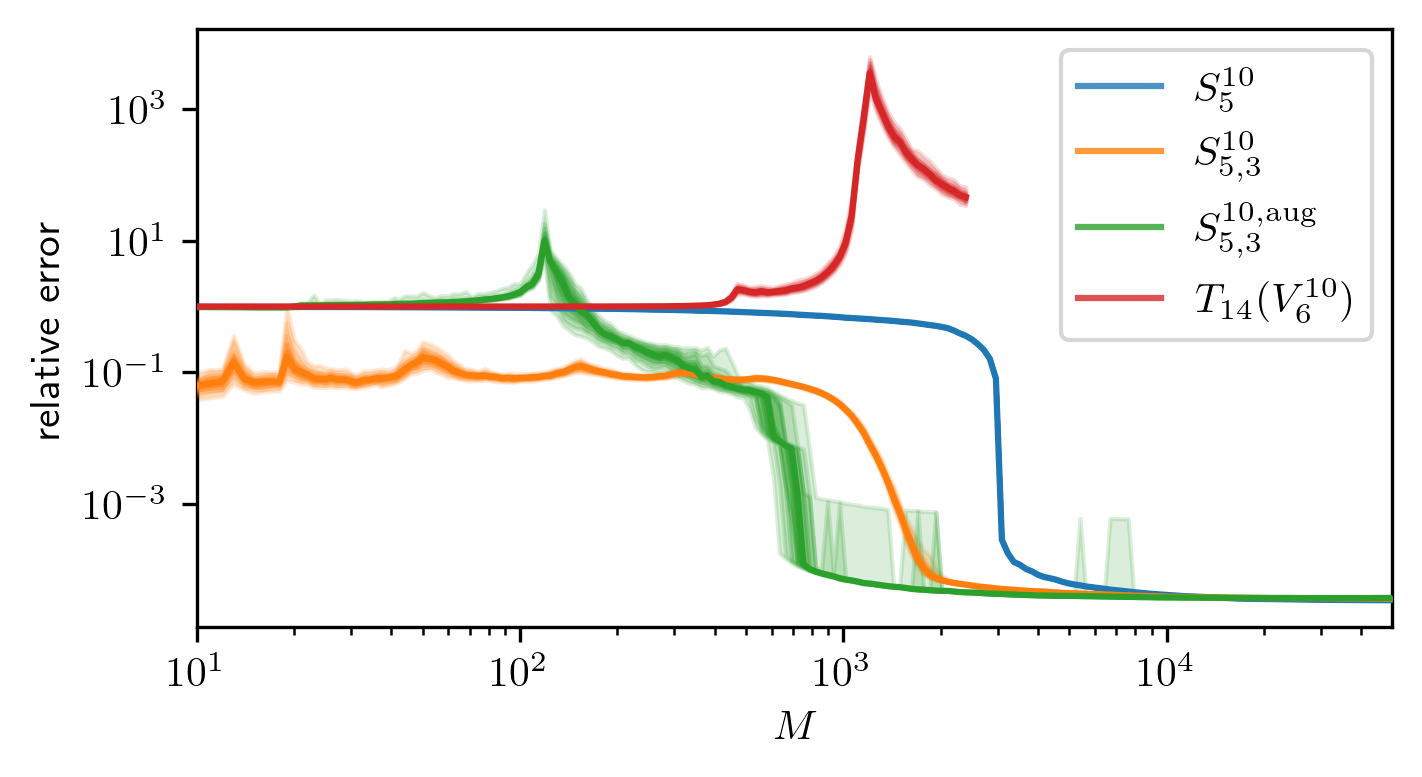}
    \caption{$0.15$--$0.85$ quantiles for recovery in blue: $S_{5}^{10}$, orange: $S_{5,3}^{10}$, green: $S_{5,3}^{10,\mathrm{aug}}$, and red: $T_{14}(V_6^{10})$.
    The experiment for $T_{14}(V_6^{10})$ was stopped early at $M=1200$ due to its prohibitive computational demand and because the expected behaviour is already observable.} \label{fig:darcy}
\end{figure}

\section{Conclusion}
We discuss the problem of function identification from data for tensor train based ansatz spaces and give some insights into when these ansatz spaces can be used efficiently.
For this we combine recent results on sample complexity~\cite{eigel_convergence_2020} and block sparsity of tensor train networks~\cite{bachmayr} to motivate a novel algorithm for the problem at hand.
We then demonstrate the applicability of this algorithm to different problems.
Up until know only dense tensor trains were used for these recovery tasks.
The numerical examples however demonstrate that this format can not compete with our novel block-sparse approach.
We observe that the sample complexity can be much more favorable for successful system identification with block sparse tensor trains than with dense tensor trains or purely sparse representations.
We expect that inclusion of rank-adaptivity using techniques from SALSA or bASD is straight forward and consider it an interesting direction for forthcoming papers.
We expect, that this would improve the numerical results even further.
The introduction of rank-adaptivity would moreover alleviate the problem of having to choose a block size a-priori. %
Finally, we want to reiterate that the spaces of homogeneous polynomials are predestined for the application of least-squares recovery with an optimal sampling density (cf.~\cite{cohen_optimal_2017}) which holds opportunities for further improvement of the sample efficiency.
This leads us to the conclusion that the proposed algorithm can be applied successfully to other high dimensional problems in which the sought function exhibits sufficient regularity. 

\section*{Acknowledgements}

M.\ G\"otte was funded by DFG (SCHN530/15-1).
R.\ Schneider was supported by the Einstein Foundation Berlin.
P. Trunschke acknowledges support by the Berlin International Graduate School in Model and Simulation based Research (BIMoS).

\bibliographystyle{alpha}
\bibliography{references.bib}

\end{document}